\pgfplotsset{compat=1.13} 
\newcommand{\xmark}{\ding{55}}
\renewcommand{\d}[0]{\ensuremath{\operatorname{d}\!}}
\DeclareMathOperator*{\argmin}{arg\,min}
\DeclareMathOperator*{\diag}{diag}
\begin{document}

\title{Optimizing multigrid reduction-in-time (MGRIT) and Parareal coarse-grid operators for linear advection\protect\thanks{A published version of this article may be found at \url{https://doi.org/10.1002/nla.2367}.}}

\author[1]{Hans De Sterck}

\author[2]{Robert D. Falgout}

\author[3]{Stephanie Friedhoff}

\author[4]{Oliver A. Krzysik*}

\author[5]{Scott P. MacLachlan}

\authormark{DE STERCK \textsc{et al}.}

\address[1]{\orgdiv{Department of Applied Mathematics}, \orgname{University of Waterloo}, \orgaddress{Waterloo, Ontario, \country{Canada}}}

\address[2]{\orgdiv{Center for Applied Scientific Computing}, \orgname{Lawrence Livermore National Laboratory}, \orgaddress{Livermore, California, \country{USA}}}

\address[3]{\orgdiv{Department of Mathematics}, \orgname{Bergische Universit\"at Wuppertal}, \orgaddress{Wuppertal, \country{Germany}}}

\address[4]{\orgdiv{School of Mathematics}, \orgname{Monash University}, \orgaddress{Clayton, Victoria, \country{Australia}}}

\address[5]{\orgdiv{Department of Mathematics and Statistics}, \orgname{Memorial University of Newfoundland}, \orgaddress{St John's, Newfoundland and Labrador, \country{Canada}}}

\corres{*\email{oliver.krzysik@monash.edu}}

\abstract[Summary]{
Parallel-in-time methods, such as multigrid reduction-in-time (MGRIT) and Parareal, provide an attractive option for increasing concurrency when simulating time-dependent PDEs in modern high-performance computing environments. While these techniques have been very successful for parabolic equations, it has often been observed that their performance suffers dramatically when applied to advection-dominated problems or purely hyperbolic PDEs using standard rediscretization approaches on coarse grids. In this paper, we apply MGRIT or Parareal to the constant-coefficient linear advection equation, appealing to existing convergence theory to provide insight into the typically non-scalable or even divergent behavior of these solvers for this problem. To overcome these failings, we replace rediscretization on coarse grids with improved coarse-grid operators that are computed by applying optimization techniques to approximately minimize error estimates from the convergence theory. One of our main findings is that, in order to obtain fast convergence as for parabolic problems, coarse-grid operators should take into account the behavior of the hyperbolic problem by tracking the characteristic curves. Our approach is tested for schemes of various orders using explicit or implicit Runge-Kutta methods combined with upwind-finite-difference spatial discretizations. In all cases, we obtain scalable convergence in just a handful of iterations, with parallel tests also showing significant speed-ups over sequential time-stepping.
}

\keywords{Multigrid, MGRIT, Parareal, parallel-in-time, hyperbolic, high-order}


\maketitle

\section{Introduction}
\label{sec:intro}
As compute clock speeds stagnate and core counts of parallel machines increase dramatically, parallel-in-time methods provide an attractive option for increasing concurrency when simulating time-dependent partial differential equations (PDEs).\cite{Gander2015, Ong_Schroder_2020} Two well-known parallel time integration methods are Parareal \cite{Lions_etal_2001} and multigrid reduction-in-time (MGRIT),\cite{Falgout_etal_2014} which can be considered as multigrid methods, although Parareal also has other interpretations.\cite{Gander_Vandewalle2007} For a wide variety of diffusion-dominated problems, these algorithms can achieve a significant reduction in wall clock time over sequential time-stepping methods, given enough parallel resources.\cite{Lions_etal_2001, Bal_Maday_2002, Falgout_etal_2014, Falgout_etal_2017, Falgout_etal_2019, Oneill_etal_2017, Ong_Schroder_2020} 
However, despite this success for diffusion-dominated problems, MGRIT and Parareal (along with most other parallel-in-time methods) tend to perform quite poorly on hyperbolic PDEs. Specifically, they typically exhibit extremely slow convergence or even divergence when applied to advection-dominated PDEs.\cite{Chen_etal_2014, Dai_Maday2013, Dobrev_etal_2017, Gander_Vandewalle2007, Gander2008, Hessenthaler_etal_2018, Howse_etal_2019, Howse2017_thesis, Nielsen_etal_2018, Ruprecht_Krause2012, Ruprecht2018, Schmitt_etal_2018, Steiner_etal_2015} 
Moreover, many of these examples demonstrate a clear deterioration in convergence as the amount of dissipation in the underlying PDE and/or its discretization is decreased.\cite{Howse2017_thesis, Schmitt_etal_2018, Steiner_etal_2015} 
Note that both MGRIT and Parareal converge to the exact solution of the discrete problem after a finite number of iterations due to sequential propagation of the initial condition across the temporal domain by the relaxation scheme.\cite{Falgout_etal_2014} As such, in this paper, we use the term `divergence' to describe solvers that converge to the solution only in a number of iterations close to that for which they would reach the exact solution.

To date, attempts to address this problem have been largely unsuccessful.
Several so-called `stabilized' variants of Parareal have been developed which overcome the typically divergent behavior of standard Parareal, but are significantly more expensive, and so their practicability is limited.\cite{Chen_etal_2014, Dai_Maday2013, Ruprecht_Krause2012}
Semi-Lagrangian coarse-grid operators were considered by Schmitt et al.\cite{Schmitt_etal_2018} for the viscous Burgers equation, inspiring some of the work in this paper. Unfortunately, though, their numerical tests showed that convergence deteriorates significantly in the zero-viscosity/hyperbolic limit.\cite{Schmitt_etal_2018}
Coarsening also in space was considered by Howse et al.\cite{Howse_etal_2019} when applying MGRIT to the linear advection and Burgers' equations to provide both cheaper multigrid cycles and overcome stability issues arising from coarsening only in time. Parallel speed-up was demonstrated for linear advection only, but convergence was, ultimately, slow and not scalable, and the approach did not work when applied to higher than first-order discretizations. 
Nielsen et al.\cite{Nielsen_etal_2018} achieved modest speed-ups for the shallow water equations, in part, by reducing the order of the discretizations in time and space, rather than coarsening the mesh.
Small to moderate speed-ups for linear hyperbolic PDEs were obtained with ParaExp,\cite{Gander_Guttel_2013} but this algorithm falls outside of the class of MGRIT-like algorithms that are the subject of this paper.
In all of these works, speed-ups over sequential time-stepping for hyperbolic PDEs are typically quite small (on the order of two to six), with slow convergence of the iteration ultimately inhibiting faster runtimes due to increased parallelism. For comparison, a speed-up on the order of 20 times was achieved for a diffusion-dominated parabolic problem in Falgout et al.\cite{Oneill_etal_2017} 

A number of theoretical convergence analyses have been developed for Parareal and MGRIT,\cite{DeSterck_etal_2019, Dobrev_etal_2017, Gander_Vandewalle2007, Gander2008, Hessenthaler_etal_2020, Ruprecht2018, Southworth2019, Gander_etal_2018, Friedhoff_MacLachlan2015} which have helped to explain numerical convergence results, and will likely play an important role in the design of new solvers. Furthermore, some theoretical studies have identified potential roadblocks for fast parallel-in-time convergence of hyperbolic PDEs.\cite{Gander_Vandewalle2007} Nevertheless, there does not yet exist a general understanding of why  the parallel-in-time solution of advection-dominated problems seems to be so much more difficult than for their diffusion-dominated counterparts.

The aim of this paper is to demonstrate that, in fact, MGRIT and Parareal, with the right choice of coarse-grid operator, can efficiently integrate hyperbolic PDEs. To do so, we work in an idealized environment, considering the constant-coefficient linear advection problem in one spatial dimension subject to periodic spatial boundary conditions, for which we can appeal to existing sharp MGRIT convergence theory. Informed by convergence theory and the PDE, we develop heuristics that coarse-grid operators should satisfy and we formulate optimization problems based on these to find `near-optimal' coarse-grid operators. For example, one such heuristic that we have developed is that coarse-grid operators should track information along characteristics, similar to the semi-Lagrangian schemes considered by Schmitt et al.\cite{Schmitt_etal_2018} However, our optimization-based coarse-grid operators lead to robust solvers in the hyperbolic limit, unlike their semi-Lagrangian coarse-grid operators.\cite{Schmitt_etal_2018}
We demonstrate that these coarse-grid operators lead to scalable convergence, in the sense that the computational work is almost linear asymptotically as a function of the problem size (i.e., the solvers converge in approximately a constant number of multigrid iterations asymptotically). Moreover, convergence is achieved in just a handful of iterations, for both implicit and explicit discretizations, resulting in significant speed-ups in parallel over sequential time-stepping, comparable to what has been achieved for parabolic PDEs. 
Notably, our results include the use of high-order accurate discretizations (up to fifth order), which is important because many results reported in the literature for hyperbolic PDEs have used diffusive, low-order discretizations that have likely aided the convergence of the given parallel-in-time method. Additionally, our approach works for large coarsening factors, and we employ fine-grid CFL numbers that reflect what would realistically be used with sequential time-stepping.

The optimization approaches presented in this paper rely on calculations that are feasible specifically for the case of one-dimensional, constant-coefficient linear advection. This precludes direct application of these approaches to more complicated hyperbolic PDEs. Crucially, though, they give us powerful tools to demonstrate that, for this canonical hyperbolic PDE, it is possible to obtain highly efficient MGRIT and Parareal solvers. 
At the same time, our main finding that coarse-grid operators should track characteristics is general, and we believe it will prove relevant to the design of solvers for more complicated hyperbolic PDEs. Practical methods for selecting coarse-grid operators for linear advection and other hyperbolic problems that follow this main insight are the subject of further research.

The remainder of this paper is organized as follows. In \S \ref{sec:prelims}, the model problem and its discretizations are introduced, a brief overview of MGRIT and Parareal is given, and some motivating numerical examples are presented. A discussion on convergence theory and what it reveals about the difficulty of hyperbolic problems is given in \S \ref{sec:convergence}. Section~\ref{sec:Psi_linear_approx} develops an optimization-based approach for finding effective coarse-grid time-stepping operators. Parallel results are given in \S \ref{sec:parallel} for some of the newly developed coarse-grid operators. Concluding remarks and a discussion of future work is the subject of \S \ref{sec:conclusions}. 

\section{Preliminaries} 
\label{sec:prelims}
In this section, we outline the model problem, its discretizations, and give a brief summary of MGRIT and Parareal. We then demonstrate the difficulty our seemingly simple model problem poses for these algorithms via some numerical examples.

\subsection{Model problem and discretizations}
\label{ssec:model_problem}
For the model problem, we consider the one-dimensional linear advection equation,
\begin{align} \label{eq:PDE}
\partial_t u + \alpha \partial_x u = 0, \quad (x,t) \in [-1,1] \times (0,T], \quad u(x,0) = \sin^4(\pi x),
\end{align}
with constant wave speed $\alpha > 0$. While the exact solution of this canonical hyperbolic PDE is just the shifted initial condition, and its numerical approximation is easily obtained in the sequential time-stepping setting, it presents enormous difficulty for parallel-in-time solvers. In what follows, as throughout most of this paper, we consider periodic boundary conditions in space, but briefly turn our attention to inflow/outflow boundaries in \S \ref{ssec:LSQ_inflow}.

To numerically approximate the solution of \eqref{eq:PDE}, finite-difference spatial discretizations are used with Runge-Kutta time integrators. As such, the spatial domain $x \in [-1,1]$ is discretized with $n_x+1$ equidistant points with spacing $\Delta x$, and the temporal domain $t \in [0, T]$ is discretized with $n_t+1$ equidistant points having a spacing of $\Delta t$. We employ the method of lines to generate a semi-discretized representation. First, a $p$th-order upwind finite-difference spatial discretization is applied to \eqref{eq:PDE}, resulting in the system of ordinary differential equations (ODEs)
\begin{align} \label{eq:ODEs}
\bm{u}'(t) = {\cal L} \bm{u}, 
\quad 
t \in (0,T],
\quad
\bm{u}(0) = u(\bm{x},0),
\end{align}
in which ${\cal L} \colon \mathbb{R}^{n_x} \to \mathbb{R}^{n_x}$ represents the discretization of $-\alpha \partial_x$ on the spatial mesh. Since $\alpha$ is constant and periodic boundaries are applied, ${\cal L}$ is a circulant matrix and is, thus, unitarily diagonalized by the discrete Fourier transform (DFT). Specifically, we use upwind-finite-difference spatial discretizations of orders 1--5, which we denote as U1--U5. Letting $v'_i$ denote the derivative of the single variable function $v(x)$ at point $x_i$, these are given by
\begin{align*}
\tag{U1}
v'_i  
&= 
\frac{1}{\Delta x} \big[ v_{i} - v_{i-1} \big] +
\mathcal{O}(\Delta x), 
\\
\tag{U2}
v'_i
&= 
\frac{1}{2 \Delta x} 
\big[
3v_{i} - 4v_{i-1} + v_{i-2} 
\big] 
+ 
\mathcal{O}(\Delta x^2), 
\\
\tag{U3}
v'_i
&= 
\frac{1}{6\Delta x} 
\big[
2v_{i+1} + 3v_{i} - 6v_{i-1} + v_{i-2} 
\big] 
+ 
\mathcal{O}(\Delta x^3), 
\\
\tag{U4}
v'_i
&= 
\frac{1}{12\Delta x} 
\big[
3v_{i+1} + 10v_{i} - 18v_{i-1} + 6v_{i-2} - v_{i-3} 
\big] 
+ 
\mathcal{O}(\Delta x^4), 
\\
\tag{U5}
v'_i
&= 
\frac{1}{60\Delta x} 
\big[
-3v_{i+2} + 30v_{i+1} + 20v_{i} - 60v_{i-1} + 15v_{i-2} - 2v_{i-3}
\big] 
+ 
\mathcal{O}(\Delta x^5). 
\end{align*} 
These discretizations may be constructed using standard techniques, as described by Shu\cite{Shu1998}, for example.

The ODE system \eqref{eq:ODEs} is then discretized using either a $p$th-order explicit Runge-Kutta (ERK) method, or a $p$th-order, L-stable singly diagonally implicit Runge-Kutta (SDIRK) method, with the resulting scheme denoted as either ERK$p$+U$p$, or SDIRK$p$+U$p$. We consider ERK schemes of orders 1--5 as follows: The 1st-order scheme is Euler's method; the 2nd- and 3rd-order methods are the `optimal' strong-stability-preserving schemes;\cite[(9.7), (9.8)]{Hesthaven2017} the 4th-order scheme is the `classical Runge-Kutta method';\cite[p. 180]{Butcher2003} and finally, see Butcher\cite[(236a)]{Butcher2003} for the 5th-order scheme. We consider SDIRK schemes of orders 1--4 as follows: The first-order scheme is Euler's method; the 2nd- and 3rd-order methods can be found in Butcher;\cite[pp. 261--262]{Butcher2003} and the 4th-order scheme is given by Hairer and Wanner.\cite[(6.16)]{Hairer_Wanner1996} Butcher tableaux for these Runge-Kutta schemes can be found in Appendix~\ref{sec:butcher}.

Upon application of a Runge-Kutta scheme to ODEs \eqref{eq:ODEs}, their numerical solution may be written in the one-step form
\begin{align} \label{eq:time_stepping}
\bm{u}^{n+1} = \Phi \bm{u}^{n}, \quad \bm{u}^0 = \bm{u}(0), \quad n = 0,\ldots,n_t.
\end{align}
Note that equations \eqref{eq:time_stepping} can be written as a single, large space-time block lower bidiagonal linear system. Here, $\Phi \colon \mathbb{R}^{n_x} \to \mathbb{R}^{n_x}$ is known as the `time-stepping operator,' as it steps the discrete solution from one time level to the next. The eigenvalues of $\Phi$ can be computed as a function of those of ${\cal L}$.\cite{Dobrev_etal_2017, Hessenthaler_etal_2020} In fact, it can be shown that $\Phi$ is a rational function (in a matrix sense) of ${\cal L}$, as in Lemma~\ref{lem:Phi_rational}.
\begin{lemma}[Rational form of $\Phi$]
\label{lem:Phi_rational}
Let $R(z) = P(z)/Q(z)$ denote the stability function \cite[Lemma 351A]{Butcher2003} of a Runge-Kutta scheme applied to \eqref{eq:ODEs}, in which $P$ and $Q$ are polynomials derived from the Butcher tableau of the scheme.  Then, for diagonalizable matrices ${\cal L} \in \mathbb{R}^{n_x \times n_x}$, the time-stepping operator in \eqref{eq:time_stepping} is
\begin{align} \label{eq:Phi_rational}
\Phi(\Delta t {\cal L}) = P(\Delta t {\cal L}) [Q(\Delta t {\cal L})]^{-1}.
\end{align}
\end{lemma}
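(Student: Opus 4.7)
The plan is to reduce the statement to the well-known scalar case by diagonalization. First I would write ${\cal L} = V D V^{-1}$ with $D = \diag(\lambda_1,\ldots,\lambda_{n_x})$, change variables via $\bm{v} = V^{-1}\bm{u}$, and observe that under this change of variables the ODE system decouples into $n_x$ scalar problems $v_i' = \lambda_i v_i$. A Runge-Kutta scheme applied componentwise to a decoupled linear system is equivalent to applying the scheme to each scalar equation independently, so one step in the $\bm{v}$-variables produces $\bm{v}^{n+1} = \diag\bigl(R(\Delta t \lambda_1),\ldots,R(\Delta t \lambda_{n_x})\bigr) \bm{v}^n$ by the very definition of the stability function $R$.

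Next I would transform back to the $\bm{u}$-variables: $\bm{u}^{n+1} = V R(\Delta t D) V^{-1} \bm{u}^n$. Recognizing the right-hand side as $R(\Delta t {\cal L})$ evaluated in the functional-calculus sense, and writing $R = P/Q$ with $P,Q$ polynomials determined by the Butcher tableau (as in Butcher's Lemma~351A, cited in the statement), gives
\[
\Phi(\Delta t {\cal L}) = V \, P(\Delta t D)\, [Q(\Delta t D)]^{-1}\, V^{-1} = P(\Delta t {\cal L})\, [Q(\Delta t {\cal L})]^{-1},
\]
where in the last step I use that $V P(\Delta t D) V^{-1} = P(\Delta t {\cal L})$ and similarly for $Q$, together with the fact that $P(\Delta t {\cal L})$ and $Q(\Delta t {\cal L})$ commute (both are polynomials in the same matrix), so the order of the product and the inverse are unambiguous.

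The only real subtlety is showing that $Q(\Delta t {\cal L})$ is invertible, which is what makes the rational-form expression meaningful. I would handle this by noting that for a Runge-Kutta method with Butcher matrix $A$, one has $Q(z) = \det(I - zA)$ (or a divisor thereof), so $Q(\Delta t \lambda_i) = 0$ would correspond to $1/(\Delta t \lambda_i)$ being an eigenvalue of $A$. For the SDIRK schemes considered in this paper the diagonal entries of $A$ are positive real numbers, while for ERK schemes $A$ is strictly lower triangular and $Q \equiv 1$, so in both cases invertibility of $Q(\Delta t {\cal L})$ holds provided the single implicit stage solves in the time-stepping scheme are themselves well-posed — a standing assumption for the method to even define $\Phi$ in \eqref{eq:time_stepping}. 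This would be the one place where I would pause to be careful, but it is really just a sanity check rather than a substantive obstacle; the rest of the argument is a one-line consequence of diagonalization plus the definition of the stability function.
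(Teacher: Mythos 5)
Your argument—diagonalize ${\cal L}$, decouple into scalar ODEs, apply the stability function componentwise, and recouple—is precisely the approach the paper sketches (which cites the same diagonalization-based stability analysis in Butcher and in Hairer--Wanner). The additional care you take regarding invertibility of $Q(\Delta t {\cal L})$ is a reasonable sanity check but goes slightly beyond what the paper records, which simply defers to the standard references.
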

\begin{proof} 
The proof essentially follows the same steps as analyzing the Runge-Kutta stability of ODE system \eqref{eq:ODEs} via decoupling it into a system of scalar ODEs, with the additional final step of recoupling the system. See, for example, Butcher,\cite[pp. 74--75]{Butcher2003} and Hairer and Wanner,\cite[pp. 15--16]{Hairer_Wanner1996} for more details.
\end{proof}
For an ERK scheme, $Q(z) = 1$ and so the Runge-Kutta stability function used in Lemma~\ref{lem:Phi_rational} is simply a polynomial, $R(z) = P(z)$. 

\begin{corollary} \label{cor:circulant_Phi}
For periodic boundary conditions applied to \eqref{eq:PDE}, the time-stepping operator $\Phi$ in \eqref{eq:time_stepping} can be written as the product of a sparse circulant matrix and the inverse of a sparse circulant matrix. In the case of an ERK scheme, $\Phi$ is simply a sparse circulant matrix.
\end{corollary}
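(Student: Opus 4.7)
The plan is to chain together Lemma~\ref{lem:Phi_rational} with standard structural properties of circulant matrices. First I would recall that the set of $n_x \times n_x$ circulant matrices forms a commutative algebra that is closed under matrix addition, scalar multiplication, products, and inversion whenever the matrix is invertible, since every such matrix is simultaneously diagonalized by the discrete Fourier transform. In particular, for any polynomial $P$ and any circulant $A$, $P(A)$ is again circulant, and if $Q(A)$ is invertible then $[Q(A)]^{-1}$ is also circulant.

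Next I would specialize to the setting of \eqref{eq:PDE}. The excerpt has already observed that, for periodic boundary conditions and constant $\alpha$, the spatial discretization matrix ${\cal L}$ is circulant. Invoking Lemma~\ref{lem:Phi_rational} then gives $\Phi = P(\Delta t {\cal L}) [Q(\Delta t {\cal L})]^{-1}$, which immediately exhibits $\Phi$ as the product of a circulant matrix and the inverse of a circulant matrix.

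To justify the word ``sparse,'' I would appeal to the banded structure of each upwind stencil U$p$, which involves only $p+1$ points and hence makes ${\cal L}$ a sparse circulant (banded modulo periodic wrap-around). Polynomials of bounded degree in a banded matrix remain banded, with bandwidth growing only linearly in the polynomial degree. The degrees of $P$ and $Q$ are controlled by the number of Runge-Kutta stages, which is fixed independently of $n_x$, so both $P(\Delta t {\cal L})$ and $Q(\Delta t {\cal L})$ are sparse circulants. The ERK case then follows as a trivial specialization: as noted immediately after Lemma~\ref{lem:Phi_rational}, $Q \equiv 1$ for ERK schemes, so $\Phi = P(\Delta t {\cal L})$ is a polynomial in a sparse circulant matrix, and is therefore itself sparse circulant.

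There is essentially no real obstacle to overcome; the corollary is a direct consequence of Lemma~\ref{lem:Phi_rational} together with the DFT-diagonalization of circulants. The one point that warrants care is being precise in the SDIRK case: the factor $[Q(\Delta t {\cal L})]^{-1}$ is circulant but generally not sparse, so the statement is correctly read as asserting that $\Phi$ is a product whose two factors $P(\Delta t {\cal L})$ and $Q(\Delta t {\cal L})$ are each sparse circulants, rather than that $\Phi$ itself is sparse in the implicit case.
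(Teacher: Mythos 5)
Your proof is correct and follows essentially the same route as the paper: invoke Lemma~\ref{lem:Phi_rational} and use closure of circulant matrices under addition, multiplication, and inversion. You are somewhat more careful than the paper in justifying sparsity via the bounded bandwidth of $P(\Delta t{\cal L})$ and $Q(\Delta t{\cal L})$, and in noting that $[Q(\Delta t{\cal L})]^{-1}$ itself need not be sparse, but these are elaborations rather than a different argument.
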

\begin{proof}
For periodic boundaries, the finite-difference spatial discretizations ${\cal L}$ are sparse and circulant, and noting that circulant matrices are closed under addition and multiplication, the result follows immediately from the rational form of $\Phi$ in \eqref{eq:Phi_rational}.
\end{proof}

The CFL number for Runge-Kutta finite-difference discretizations of \eqref{eq:PDE} is defined as
\begin{align} \label{eq:CFL_definition}
c := \alpha \frac{\Delta t}{\Delta x}.
\end{align}
The explicit discretizations considered here suffer from a CFL limit, for which a necessary condition for numerical stability is $c \leq c_{\rm max}$. Values of $c_{\rm max}$ can be computed from the Runge-Kutta stability function and the eigenvalues of $\Delta t {\cal L}$, and are given in Table~\ref{tab:CFL_limits}. Throughout this paper, experiments using ERK discretizations will employ a CFL fraction---ratio of CFL number to CFL limit---of 85\%, $c = 0.85c_{\max}$, since it is realistic of what would be used for regular time-stepping. In all SDIRK experiments, a CFL number of $c = 4$ is used. All of the SDIRK+U schemes considered here are unconditionally stable since the eigenvalues of the circulant matrices $\Delta t {\cal L} \in \mathbb{R}^{n_x \times  n_x}$ lie in the left-half plane, which means they are contained in the stability region of any L-stable Runge-Kutta method. A wave speed of $\alpha = 1$ is used in all experiments.
\begin{table}[tbhp] 
\caption{
CFL limits $c_{\rm max}$ for ERK+U discretizations of \eqref{eq:PDE} with periodic boundary conditions. 
\label{tab:CFL_limits}
}
\begin{center}
\begin{tabular}{|c|c|c|c|c|c|} 
\hline
Scheme & 
ERK1+U1  &
ERK2+U2 & 
ERK3+U3 & 
ERK4+U4 & 
ERK5+U5 \\ \hline
$c_{\rm max}$ & 
1 & $1/2$ 
& 1.62589$\ldots$ 
& 1.04449$\ldots$
& 1.96583$\ldots$ \\ \hline
\end{tabular}
\end{center}
\end{table}

To demonstrate the accuracy of the discretizations used here and to emphasize that the high-order methods faithfully represent the non-dissipative nature of \eqref{eq:PDE}, computed discretization errors are shown in Figure~\ref{fig:discretization_errors}. The high-order methods stand in stark contrast with the first-order SDIRK1+U1 (right-hand panel), which has yet to reach its asymptotic convergence rate of one, because it possesses significant numerical diffusivity. 
\begin{figure}[h!t]
\centerline{
\includegraphics[width=0.448\textwidth]{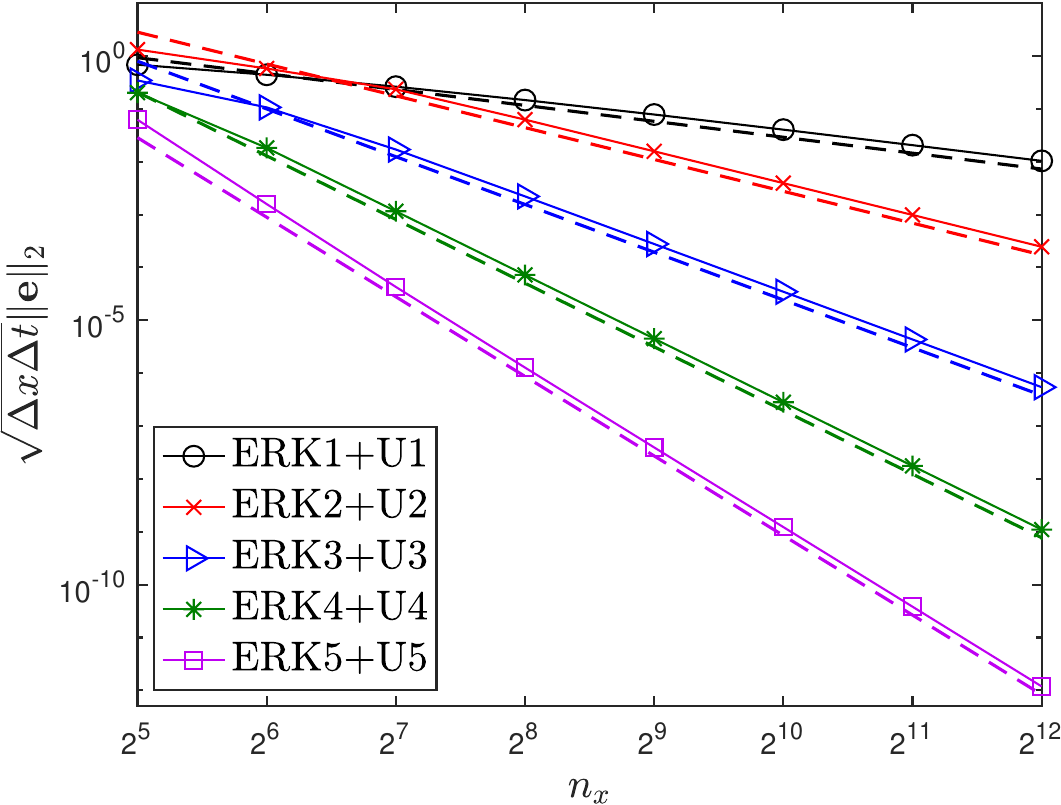}
\quad\quad
\includegraphics[width=0.448\textwidth]{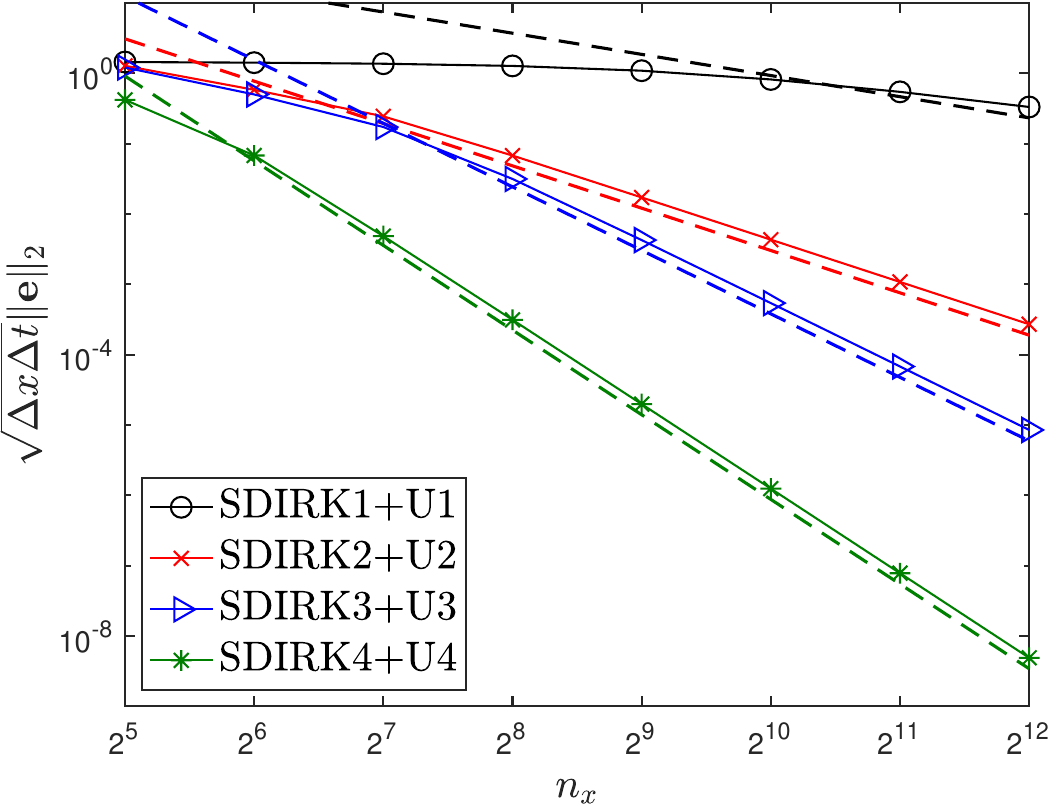}
}
\caption{
Space-time discretization errors for \eqref{eq:PDE} measured in the discrete $L_2$-norm. Left: ERK+U. Right: SDIRK+U. Plotted underneath each scheme's errors is a dashed line showing the theoretical convergence rate (order $p$ for ERK$p$/SDIRK$p$+U$p$).  Note the use of different scalings of the vertical axes in the plots.
\label{fig:discretization_errors}
}
\end{figure}

\subsection{MGRIT, Parareal, and numerical set-up}
In this section, we give a brief overview of MGRIT and Parareal, and describe the set-up for our numerical tests. MGRIT and Parareal are parallel, multilevel, iterative methods for solving block lower triangular systems arising from evolutionary problems, such as \eqref{eq:time_stepping}; note, however, that Parareal is typically only thought of as a two-level method. In this paper, we make no significant differentiation between MGRIT and Parareal, since Parareal can be described in the MGRIT framework via certain choices of algorithmic parameters (as discussed below). As such, beyond the current section, we will focus on the MGRIT algorithm.

We now give an overview of the MGRIT algorithm as it applies to model problem \eqref{eq:time_stepping}; see Falgout et al.\cite{Falgout_etal_2014, Oneill_etal_2017} or Howse et al.\cite{Howse_etal_2019} for more detailed descriptions which apply to nonlinear problems, for example. The temporal mesh $(t^n)_{n = 0}^{n_t} := (n \Delta t)_{n = 0}^{n_t}$ on which the problem is posed is the `fine grid,' and an integer coarsening factor $m > 1$ is used to induce a `coarse grid' consisting of a subset of the fine-grid points, $(m n \Delta t)_{n = 0}^{n_t/m}$. The set of points appearing exclusively on the fine grid are called `F-points,' while those appearing on both fine and coarse grids are `C-points.' An MGRIT iteration combines pre-relaxation with a coarse-grid correction. The two fundamental types of relaxation are: F-relaxation, which is time-stepping starting from each C-point across the following F-interval; and C-relaxation, which is time-stepping from the last F-point in each interval to its following C-point. The standard relaxation sweeps performed in MGRIT are either: F-relaxation (commonly used in Parareal), or the stronger FCF-relaxation, which is an F-, followed by a C-, followed by an F-relaxation. 

The coarse-grid problem is obtained by approximating the Schur complement system (with respect to C-points) of the fine-grid residual equation. This results in the algebraic error $\bm{e}_{\rm c}$ at C-points being approximated by the block lower-bidiagonal system
\begin{align} \label{eq:time_stepping_cg}
\bm{e}^{n+1}_{\rm c} = \Psi \bm{e}^{n}_{\rm c} + \bm{r}^{n+1}_{\rm c},  \quad \bm{e}^0_{\rm c} = 0, \quad n = 0,\dots, n_t/m  -1,
\end{align}
in which $\bm{r}^{n+1}_{\rm c}$ is the fine-grid residual at the $n+1$st C-point, and $\Psi \colon \mathbb{R}^{n_x} \to \mathbb{R}^{n_x}$ is the coarse-grid time-stepping operator, responsible for propagating the error from one C-point to the next. After solving \eqref{eq:time_stepping_cg}, the coarse-grid error is interpolated to the fine grid via so-called `ideal interpolation' which corresponds to injection at C-points followed by an F-relaxation. To solve the coarse-grid problem \eqref{eq:time_stepping_cg}, one can either do a sequential forward solve, or recursively apply the algorithm since \eqref{eq:time_stepping_cg} has the same block lower-bidiagonal structure as fine-grid problem \eqref{eq:time_stepping}. A sequential coarse-grid solve is almost always used in Parareal algorithms, making them two-level solvers.

Taking $\Psi = \Psi_{\rm ideal} := \Phi^m$ defines an \textit{ideal} coarse-grid time-stepping operator in the sense that the exact solution of \eqref{eq:time_stepping} is reached in a single MGRIT iteration. In this instance, coarse-grid problem \eqref{eq:time_stepping_cg} really is the Schur complement system of the fine-grid residual equation. However, no speed-up in parallel can be achieved with $\Psi = \Psi_{\rm ideal}$ since the sequential coarse-grid solve is as expensive as the original problem of time-stepping with $\Phi$ across the entire fine grid. Instead, one should choose $\Psi$ to be some approximation of $\Phi^m$---or equivalently, it should approximate taking $m$ steps with $\Phi$ on the fine grid---under the constraint that its action is significantly cheaper to compute so that speed-up can be achieved. Typically $\Psi$ is chosen through the process of rediscretizing $\Phi$ on the coarse grid, whereby it is employed with the enlarged coarse-grid time step, $m \Delta t$,\cite{DeSterck_etal_2019, Dobrev_etal_2017, Falgout_etal_2014} but other techniques have also been considered.\cite{Falgout_etal_2019, Nielsen_etal_2018}

For completeness, we now describe all of the settings used in our numerical tests. We exclusively use FCF-relaxation since we find that it generally leads to faster and more scalable solvers than using F-relaxation. The initial iterate for the space-time solution is uniformly random except at $t=0$, where it matches the prescribed initial condition. Unless otherwise noted, the metric used to report solver convergence is the number of iterations needed to achieve a space-time residual below $10^{-10}$ in the discrete $L_2$-norm. This stopping criterion exceeds the accuracy of the underlying discretizations in almost all cases, and so its use typically leads to a dramatic `over solving' of the space-time system with respect to the discretization error.  Nonetheless, we use such a small halting tolerance to highlight asymptotic convergence behavior. For all ERK$p$+U$p$ tests, a spatial resolution is selected, and a number of points $n_t$ in time is chosen to be the largest power of two such that $T = \Delta t n_t$ does not exceed 8 (note that requiring $n_t$ to be a power of two simplifies the implementation because we always coarsen by a power of two). For $p = (1,2,3,4,5)$ and a CFL fraction of 85\%, this results in final integration times $T \approx (6.8,6.8,5.5,7.1,6.7)$. For all SDIRK+U tests, $T = 8$ and $n_t = n_x$ such that a CFL number \eqref{eq:CFL_definition} of $c = 4$ results. Where scaling tests are presented, the mesh is refined uniformly in both space and time such that the CFL number of the fine-grid discretization remains constant.

\subsection{Failure of MGRIT with rediscretization for the model problem}
\label{ssec:motivating_examples}
To provide a baseline for the numerical results shown later in this paper, we now present numerical results for model problem \eqref{eq:PDE} using MGRIT with rediscretized coarse-grid operators (i.e., $\Psi$ is chosen as $\Phi$ with an enlarged time step of $m \Delta t$). For all ERK+U discretizations of \eqref{eq:PDE}, such a coarse-grid operator leads to divergent solvers for all $m$. This behavior is driven primarily by CFL instability: For coarsening factors $m > 1$, the coarse-grid CFL limit is violated (recalling fine-grid CFL numbers are set to 85\% of their respective limits), and so the resulting (unstable) coarse-grid solution cannot accelerate convergence to the (stable) fine-grid solution. To overcome this instability, a possible strategy is to couple the explicit fine-level discretization with a stable, implicit coarse-grid discretization. In such cases, a large coarsening factor is required to amortize the increased cost of solving an implicit coarse-grid problem. However, in our numerical tests (not shown here), this technique seldom results in a good solver because the approximation it provides to $\Psi_{\rm ideal} := \Phi^m$ is not good enough, even for small~$m$; furthermore, we are unaware of any results in the literature that have used this technique (either successfully or unsuccessfully) for hyperbolic problems. In the few instances where speed-up has been achieved for explicit discretizations of hyperbolic problems, alternative techniques, such as incorporating spatial coarsening,\cite{Howse_etal_2019} or reducing the order of the discretization in time\cite{Nielsen_etal_2018} have been used. While such techniques certainly avoid coarse-grid CFL instabilities, it is not clear that they result in efficient algorithms, since only small speed-ups have been observed in practice.\cite{Howse_etal_2019,Nielsen_etal_2018} Thus, we do not present numerical results for ERK discretizations here because the standard choice of rediscretization is divergent for our time-only coarsening algorithm and, to the best of our knowledge, no other technique exists for developing efficient coarse-grid operators for explicit discretizations.

In contrast to explicit discretizations, unconditionally stable, implicit fine-grid discretizations can be rediscretized on coarse grids to provide stable coarse-grid operators. Two-level MGRIT iteration counts for SDIRK+U discretizations of \eqref{eq:PDE} using such coarse-grid operators are given in the left side of Table~\ref{tab:examples_IRK_redisc}. All solvers, with the exception of SDIRK1+U1, are divergent since they converge in approximately the number of iterations for which the initial condition is sequentially propagated across the entire domain (if one uses exact arithmetic), as is done in sequential time-stepping. The relatively better performance of SDIRK1+U1 is attributable to the fact that it is highly diffusive (see Figure~\ref{fig:discretization_errors}), but it still requires a number of iterations that is much higher than what we will achieve with the new approach introduced in this paper. 

When using rediscretization, MGRIT convergence rates for implicit discretizations of hyperbolic problems strongly depend on the CFL number, with smaller CFL numbers typically resulting in faster convergence, just as in the explicit case, even though there is no CFL limit to violate. This can be seen by contrasting the types of convergence rates reported in Dobrev et al.\cite{Dobrev_etal_2017} for linear advection with those shown in Table~\ref{tab:examples_IRK_redisc} for larger, more realistic CFL numbers. This behavior stands in stark contrast with that for diffusion-dominated problems where convergence is typically achieved within 10 or so iterations, even for high-order discretizations and large coarsening factors.\cite{Dobrev_etal_2017, Falgout_etal_2014}

\begin{table}[tbhp] 
\caption{
Two-level iteration counts for SDIRK+U discretizations using a rediscretized coarse-grid operator. Left: Measured iteration counts. Right: Iteration counts at which the exact solution is achieved using exact arithmetic, $n_t/(2m)$. The `\xmark' denotes a solve which suffered an overflow error at the 358th iteration where the residual norm was approximately $10^{303}$.
\label{tab:examples_IRK_redisc}
} 
\begin{center}
\begin{tabular}{|c|c|cc|cc|} 
\hline
\multirow{2}{*}{Scheme} 
&
\multirow{2}{*}{$n_x \times n_t$} 
& 
\multicolumn{2}{c|}{Iteration count} 
& 
\multicolumn{2}{c|}{$n_t/(2m)$} \\ \cline{3-6}
&
& 
$m=2$ & $m=4$ 
&
$m=2$ & $m=4$
\\ \Xhline{2\arrayrulewidth}
\multirow{2}{*}{SDIRK1+U1} 
& $2^{10} \times 2^{10}$ & 18 & 38 & 256 & 128 \\
& $2^{12} \times 2^{12}$ & 18 & 40 & 1024 & 512 \\
\hline
\multirow{2}{*}{SDIRK2+U2} 
& $2^{10} \times 2^{10}$ & 241 & 128 & 256 & 128 \\ 
& $2^{12} \times 2^{12}$ & 1008 & 514 & 1024 & 512 \\ 
\hline
\multirow{2}{*}{SDIRK3+U3} 
& $2^{10} \times 2^{10}$ & 183 & 128 & 256 & 128 \\ 
& $2^{12} \times 2^{12}$ & 891 & 507 & 1024 & 512 \\ 
\hline
\multirow{2}{*}{SDIRK4+U4} 
& $2^{10} \times 2^{10}$ & 256 & 130 & 256 & 128 \\ 
& $2^{12} \times 2^{12}$ & \xmark & 520 & 1024 & 512 \\ 
\hline
\end{tabular}
\end{center}
\end{table}

\section{Convergence theory applied to hyperbolic problems} 
\label{sec:convergence}
To better understand the reason for poor convergence of MGRIT applied to the model problem (as shown in the previous section), and hyperbolic PDEs more generally, we now recall the two-level MGRIT convergence theory of Dobrev et al.\cite{Dobrev_etal_2017} and Hessenthaler et al.\cite{Hessenthaler_etal_2020} and discuss some of its implications.

\subsection{Two-level convergence theory}
\label{ssec:convergence}
The convergence behavior of MGRIT can be understood by analyzing its error propagation matrix, $E$. That is, if $\bm{e}^{(0)}$ is the initial space-time error, then after $q$ MGRIT iterations, the error obeys $\Vert \bm{e}^{(q)} \Vert \leq \Vert E \Vert^q \Vert \bm{e}^{(0)} \Vert$. To analyze $\Vert E \Vert$, we assume that the fine-grid time stepper $\Phi$ and coarse-grid time stepper $\Psi$ are simultaneously diagonalizable by a unitary transform, and denote their eigenvalues by $(\lambda_k)_{k = 1}^{n_x}$, and $(\mu_k)_{k = 1}^{n_x}$, respectively. These assumptions are satisfied by the $\Phi$ and $\Psi$ considered here, since all circulant matrices are unitarily diagonalized by the DFT. Furthermore, the eigenvalues should satisfy $|\lambda_k|, |\mu_k| < 1 \, \forall \, k$, so that the time-stepping methods are stable; note $|\lambda_k| = |\mu_k| = 1$ is permissible, but the following result does not apply for such cases.

The assumption of diagonalizability allows error reduction for each spatial mode to be considered individually. That is, if $E_k$ is the error propagator associated with the $k$th spatial mode, then we can consider $\Vert E_k \Vert$ for each $k$. Moreover, $\Vert E \Vert_2 = \max_k \Vert E_k \Vert_2$. For the case of FCF-relaxation considered in this paper, the 2-norm of $E_k$ can be bounded as,\cite[Lemma 4.1, Theorem 4.3]{Hessenthaler_etal_2020}\textsuperscript{,}\cite[Theorem 3.3]{Dobrev_etal_2017}
\begin{align} \label{eq:Dobrev_bounds}
\left\Vert
E_{k} 
\right\Vert_2
\leq
\sqrt{m} \,
| \lambda_k |^m
\frac{| \lambda_k^m - \mu_k | }
{ 1 - | \mu_k | }
\left( 1 - \left| \mu_k \right|^{n_t/m-1} \right)
\end{align}
Southworth\cite{Southworth2019} showed that this bound is equal to $\Vert E_k \Vert_2$ up to ${\cal O}(m/n_t)$, and, so, it is sharp.

Given bound \eqref{eq:Dobrev_bounds}, the question is now: What is required of $\Psi$, by way of its eigenvalues $(\mu_k)_{k = 1}^{n_x}$, for fast MGRIT convergence?  Note that under the assumption $|\mu_k| < 1$, the last factor $1 - | \mu_k |^{n_t/m-1} \to 1$ as $n_t \to \infty$, meaning convergence is primarily determined by the preceding factors. Firstly, convergence of the $k$th mode is related to how closely $\mu_k \approx \lambda_k^m$. So, in general, the spectrum of $\Psi$ should approximate that of $\Phi^m$ (but recall $\Psi = \Phi^m$ is not practically feasible). Secondly, from the denominator, error modes associated with $|\mu_k| \approx 1$ are potentially damped much slower than those having $|\mu_k| \ll 1$. This slow convergence must be rectified by ensuring the approximation $\mu_k \approx \lambda_k^m$ is more accurate for these modes. Thus, $\Psi$ must most accurately approximate the largest (in magnitude) eigenvalues of $\Phi^m$. As noted by Southworth\cite{Southworth2019}, the largest (in magnitude) eigenvalues of $\Phi$ typically correspond to the smoothest spatial modes, and, so, equivalently, the action of $\Psi$ must most accurately approximate that of $\Phi^m$ for spatially smooth modes.
Moreover, the leading $|\lambda_k|^m$ factor provides an additional damping mechanism for modes having $|\lambda_k| \ll 1$; note this factor arises as a consequence of using FCF- rather than F-relaxation.\cite{Dobrev_etal_2017, Hessenthaler_etal_2020} In summary, fast convergence necessitates the approximation $\mu_k \approx \lambda_k^m$ to hold $\forall k$, and with increasing accuracy for $|\lambda_k|,|\mu_k|  \to 1$.

\subsection{Implications of convergence theory}
We now provide some insight as to why MGRIT convergence is typically much worse for advection-dominated problems compared with their diffusion-dominated counterparts. Discretizations of advection-dominated PDEs are (usually) much less dissipative than discretizations of diffusion-dominated PDEs since the PDEs themselves have little dissipation (or none in the hyperbolic limit). The amount of dissipation of the $k$th spatial mode for a given discretization $\Phi$ is directly related to the value of $|\lambda_k|$. Typically, for a discretization of a diffusion-dominated problem, there are very few $|\lambda_k| \approx 1$, and many $|\lambda_k| \ll 1$, while for an accurate discretization of an advection-dominated problem, there are many more $|\lambda_k| \approx 1$. In either case, $|\lambda_k| \approx 1$ typically correspond to smooth spatial modes. This behavior is seen in the top row of Figure~\ref{fig:Phi_Psi_eigs}, where the (square of the) eigenvalues of $\Phi$ for a purely diffusive and a purely advective PDE are shown. 

\begin{figure}[t]
\centerline{
\includegraphics[width = 0.405\textwidth]{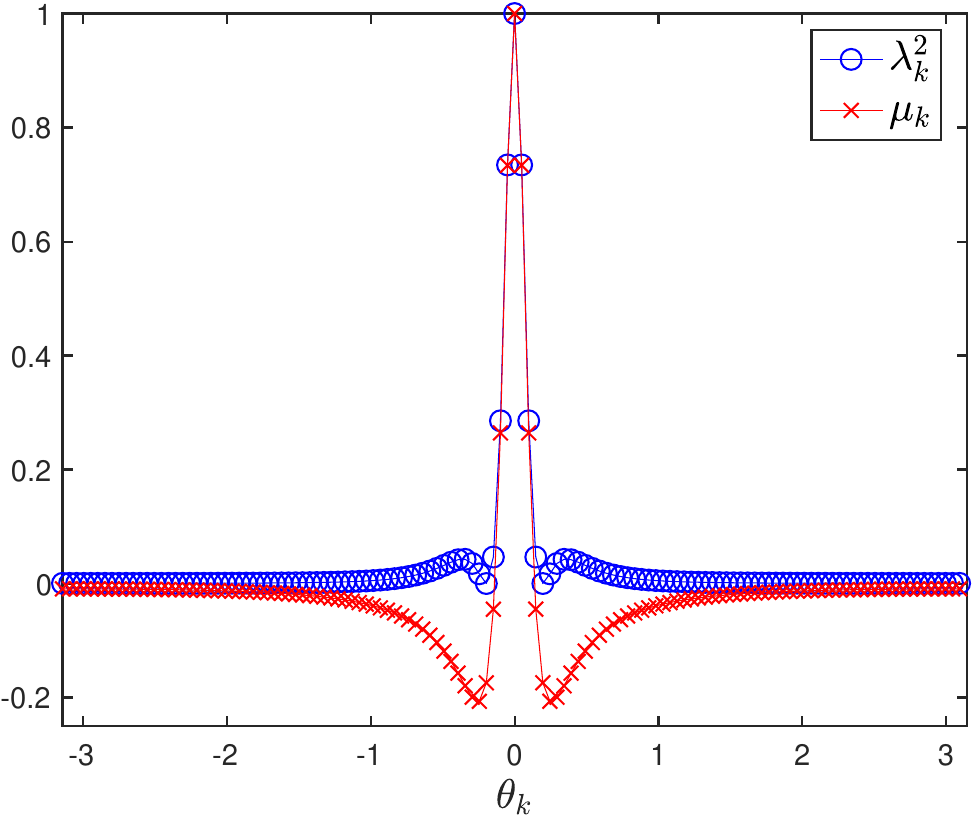}
\quad\quad
\includegraphics[width = 0.35\textwidth]{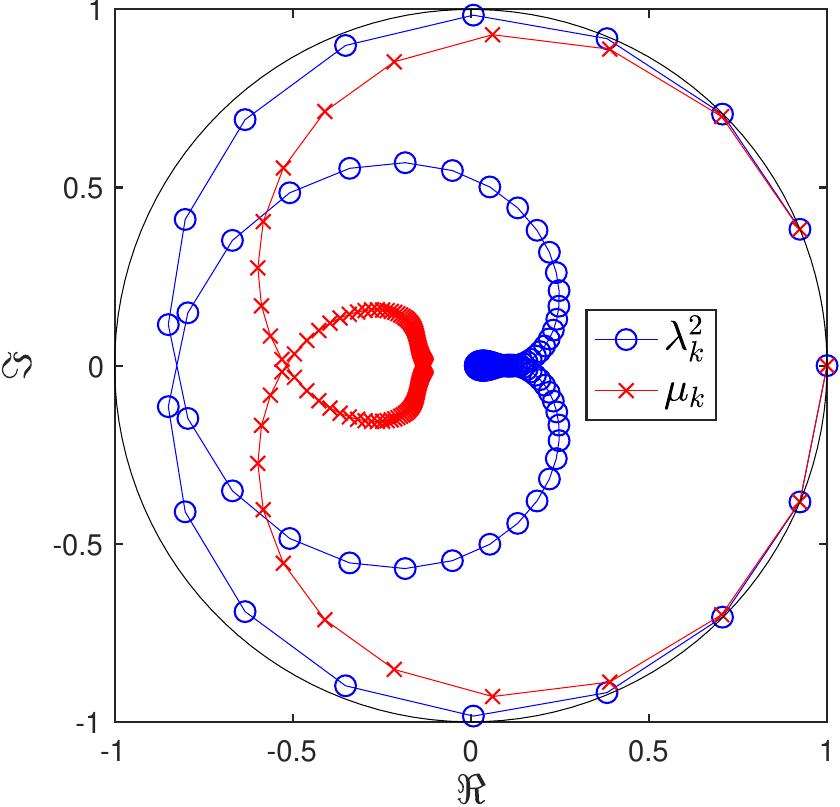}
}
\vspace{2ex}
\centerline{
\includegraphics[width = 0.412\textwidth]{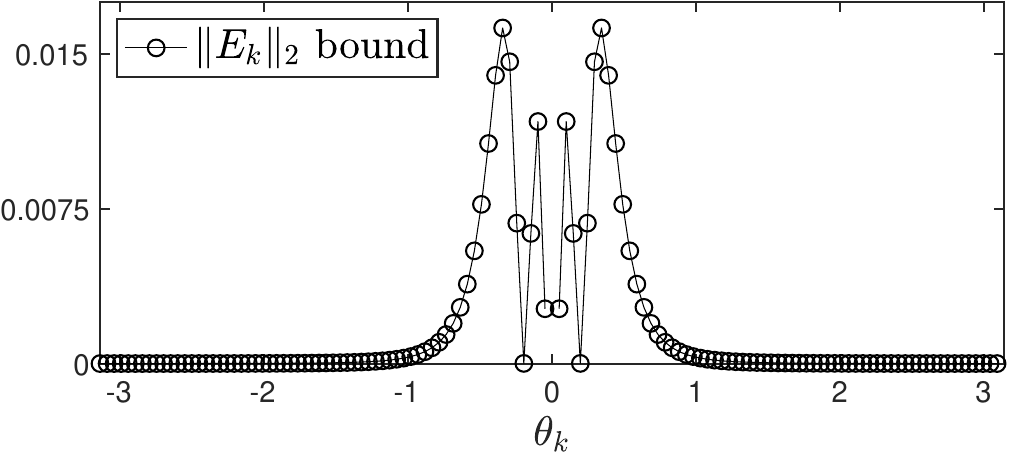}
\quad\quad
\includegraphics[width = 0.4\textwidth]{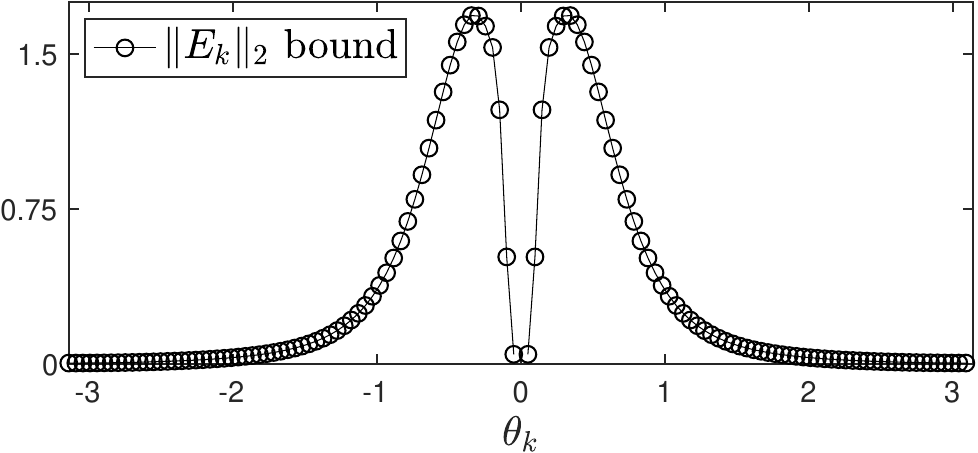}
}
\caption{
Left column: Diffusion equation $\partial_t u = \partial_{xx} u$ discretized with SDIRK2 in time and 2nd-order central finite-differences in space. Right column: Advection equation $\partial_t u + \partial_x u = 0$ discretized with SDIRK3+U3. Top row: Eigenvalues $\lambda_k^2$ of $\Phi^2$, and $\mu_k$ of $\Psi$, with $\Psi$ defined by rediscretizing $\Phi$ with $m = 2$. Bottom row: Error bound \eqref{eq:Dobrev_bounds} for each problem as a function of Fourier frequency, $\theta_k$. Both problems are subject to periodic boundary conditions in space, and are discretized on a space-time mesh covering $(x,t) \in (-1,1) \times [0,8]$ having $\Delta t = \Delta x = 1/64$, so that $\Phi, \Psi \in \mathbb{R}^{128 \times 128}$. \label{fig:Phi_Psi_eigs}}
\end{figure}

Since diffusion-dominated problems have so few $|\lambda_k| \approx 1$, $\Psi$ only has to accurately approximate very few eigenvalues of $\Phi^m$ to yield fast convergence. Conversely, since advection-dominated problems have many $|\lambda_k| \approx 1$, and very few $|\lambda_k| \ll 1$, $\Psi$ has to accurately approximate a much greater proportion of the eigenvalues of $\Phi^m$. In general, this makes the task of identifying a good $\Psi$ more difficult since it should have a simpler structure than $\Phi^m$ so that its action is less expensive to compute. 

The properties just discussed, in conjunction with the plots in Figure~\ref{fig:Phi_Psi_eigs}, help to illuminate why rediscretization of $\Phi$ with time step $m \Delta t$ typically leads to a good $\Psi$ for diffusion-dominated problems, but is often a poor choice for advection-dominated problems. 
In Figure~\ref{fig:Phi_Psi_eigs}, notice the largest eigenvalues of $\Phi^2$ are clustered around spatial frequency $\theta_k = 0$, noting that the $\theta_k = 0$ eigenvalue is at position (1,0) in the top right panel. In each instance, we see that $\mu_k$ provides a good approximation to $\lambda_k^2$ for the smoothest modes, $\theta_k \approx 0$. For the diffusion problem, this approximation is adequate to obtain fast convergence as the decay of $|\lambda_k|^2$ away from $\theta_k = 0$ is very rapid. However, for the advection problem, the approximation is inadequate since the decay of $|\lambda_k|^2$ away from $\theta_k = 0$ is more gradual and, so, the mismatch between $\lambda_k^2$ and $\mu_k$ is of much greater detriment for convergence. Error bounds \eqref{eq:Dobrev_bounds} are shown in Figure~\ref{fig:Phi_Psi_eigs} underneath the eigenvalue plots for each problem. In the diffusion case, the bound is very small and, so, convergence is fast while, in the advection case, the bound exceeds one, indicating that the solver will be divergent.  It is the smooth modes that are not accurately captured by $\Psi$ that cause the most issue. 

In summary, rediscretization essentially fails for advection-dominated problems since it does not provide an adequate approximation to $\Phi^m$ for smooth spatial modes, which tend to decay more slowly in time than for diffusion-dominated problems. Similar ideas were identified by Yavneh\cite{Yavneh1998} as being responsible for the breakdown of geometric multigrid for advection-dominated, steady-state PDEs. There, Fourier analysis showed that an inadequate coarse-grid correction of some asymptotically smooth error modes, so-called `characteristic components,' is responsible for poor multigrid performance. It is conceivable that the problematic modes we identify here actually correspond to such space-time characteristic components, but lacking an analysis similar to that by Yavneh\cite{Yavneh1998}, this is difficult to say; this is an area of current research. Nonetheless, it is likely that some of the ideas proposed by Yavneh\cite{Yavneh1998} for improving spatial multigrid solvers will be useful for developing improved MGRIT solvers.

\section{Coarse-grid operators based on a linear approximation of $\Psi_{\rm \lowercase{ideal}}$}
\label{sec:Psi_linear_approx}
From the discussion surrounding error estimates \eqref{eq:Dobrev_bounds}, the coarse-grid operator $\Psi$ should approximately minimize the difference between its spectrum and that of $\Phi^m$, in general, and particularly for larger $|\lambda_k|$. To this end, we consider $\Psi$ as the solution of the minimization problem
\begin{align} \label{eq:LSQ_general}
\Psi 
:=
\argmin \limits_{\widehat{\Psi} \in \mathbb{R}^{n_x \times n_x}} 
\Big\Vert 
W^{1/2}_{\bm{\lambda}} 
\Big[ \bm{\lambda}^m - \bm{\mu} \big( \widehat{\Psi} \big) \Big] 
\Big\Vert_2^2,
\end{align}
where $\bm{\lambda} = \big( \lambda_1, \ldots, \lambda_{n_x} \big)^\top$, $\bm{\mu} = \big( \mu_1, \ldots, \mu_{n_x} \big)^\top \in \mathbb{C}^{n_x}$, and $(\bm{\lambda}^m)_k \equiv \lambda_k^m$. Here, $W_{\bm{\lambda}} := \diag(\bm{w}) \in \mathbb{R}^{n_x \times n_x}$ is a weighting matrix, whose $k$th entry is $w_k := w(|\lambda_k|)$, in which $w \colon \mathbb{R}_+ \to \mathbb{R}_+$ is a weighting function reflecting the heuristic that it is most important to minimize $\lambda_k^m - \mu_k$ for $|\lambda_k|, |\mu_k| \to 1$, and less important for $|\lambda_k|,|\mu_k| \ll 1$. One choice of weighting function that we have found to yield good results is
\begin{align*} 
w(z) = \frac{1}{(1 - z+ \epsilon)^2},
\end{align*}
with $0 < \epsilon \ll 1$ a constant to avoid division by zero; we take $\epsilon = 10^{-6}$ in the numerical results shown here.  Note that allowing a free choice for $\widehat{\Psi}$ would naturally result in the choice $\Psi = \Phi^m$ and, so, the optimization in \eqref{eq:LSQ_general} is constrained by a pre-specified sparsity pattern of $\Psi$.

In general the eigenvalues of matrix $\Psi$ depend nonlinearly on its entries and thus \eqref{eq:LSQ_general} constitutes a nonlinear minimization problem. However, for the special case of circulant $\Psi$, \eqref{eq:LSQ_general} reduces to a linear least squares problem because the eigenvalues of a circulant matrix depend only linearly on its entries (they are given by the DFT---a linear operator---applied to its first column).
For explicit temporal discretizations of model problem \eqref{eq:PDE} with periodic boundaries, it is reasonable to impose that $\Psi$ is a sparse circulant matrix because $\Phi$ is and, hence, so too is $\Phi^m$ (Corollary~\ref{cor:circulant_Phi}). In \S~\ref{ssec:implicit}, we will also show that a sparse $\Psi$ may be used with implicit temporal discretizations of \eqref{eq:PDE}, for which $\Phi$ and $\Phi^m$ are dense matrices. Thus, for the remainder of this section, we focus exclusively on the case in which $\Psi$ is a sparse circulant matrix such that \eqref{eq:LSQ_general} reduces to a linear least squares problem.

We have also formulated and solved a nonlinear least squares problem that is based on a more direct minimization of error estimates \eqref{eq:Dobrev_bounds} than the heuristic-based \eqref{eq:LSQ_general}. This more elaborate approach, however, gives similar results for the simple model problem considered here and so it has been omitted for brevity.

\subsection{Linear least squares formulation}
Let $\tilde{\bm{\phi}}^m,\tilde{\bm{\psi}} \in \mathbb{R}^{n_x}$
denote the first columns of the circulant matrices $\Phi^m$ and $\Psi$, respectively, and note that a circulant matrix can be fully specified by its first column. Assuming the sparsity pattern of $\Psi$ is given, we let $R \in \mathbb{R}^{\nu \times n_x}$ be the restriction operator that selects these $\nu$ non-zero entries from $\tilde{\bm{\psi}}$, where $\nu \ll n_x$ since the column is sparse. Further details on the choice of $R$ (or equivalently, the sparsity pattern of $\Psi$) will be given in the following sections. The unknowns are thus the non-zero components of $\tilde{\bm{\psi}}$, which we denote as
$\bm{\psi} 
:= 
R \tilde{\bm{\psi}} 
\in 
\mathbb{R}^{\nu}$. 
Finally, let ${\cal F} \in \mathbb{C}^{n_x \times n_x}$ be the DFT matrix, then by the properties of circulant matrices, $\bm{\lambda}^m = {\cal F} \tilde{\bm{\phi}}^m$, and $\bm{\mu} = {\cal F} \tilde{\bm{\psi}} = {\cal F} R^\top \bm{\psi}$ since $R^\top R$ has ones on the diagonal in rows where $\tilde{\bm{\psi}}$ has non-zeros and zeros everywhere else. Thus, \eqref{eq:LSQ_general} can be written as a linear least squares problem for the non-zero entries in the first column of $\Psi$:
\begin{align} \label{eq:LSQ_1D_problem}
\bm{\psi} 
:= 
\argmin \limits_{\hat{\bm{\psi}} \in \mathbb{R}^{\nu}}  
\left\Vert 
W^{1/2}_{\bm{\lambda}} 
{\cal F}
\left(
\tilde{\bm{\phi}}^m - R^\top \hat{\bm{\psi}}
\right)
\right\Vert_2^2.
\end{align}

\begin{remark}[Minimizing $\Vert \Phi^m - \Psi \Vert_2^2$ is not sufficient to obtain a good solver]
\label{rem:unit_weight}
For weighting function $w = 1$, or $W_{\bm{\lambda}} = I$, \eqref{eq:LSQ_1D_problem} corresponds to minimizing the difference between the spectra of $\Phi^m$ and $\Psi$ in the two-norm. This is equivalent to minimizing the difference between $\Phi^m$ and $\Psi$ in the two-norm since they are both diagonalized by the same unitary transform. In this instance, the solution of \eqref{eq:LSQ_1D_problem} can be computed explicitly as $\bm{\psi} = R \tilde{\bm{\phi}}^m$, which means that $\Psi$ is given by truncating $\Phi^m$ in the sparsity pattern of $\Psi$. We have found that this choice of $\Psi$ typically does not lead to a fast or scalable solver for model problem \eqref{eq:PDE} because it does not adequately capture the dominant eigenvalues of $\Phi^m$ (see \S \ref{sec:convergence}). 
\end{remark}

\begin{lemma} \label{lem:LSQ_1D_real_sol}
The solution of \eqref{eq:LSQ_1D_problem} is real valued.
\end{lemma}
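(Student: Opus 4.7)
My plan is to reduce problem \eqref{eq:LSQ_1D_problem} to its normal equations, viewed as a complex least-squares problem, and then to show that both sides of those equations are real; the unique solution is then real, and must agree with the minimizer over the real-feasible set $\mathbb{R}^\nu$. Setting $A := W^{1/2} \mathcal{F} R^\top$ and $\bm{b} := W^{1/2} \mathcal{F} \tilde{\bm{\phi}}^m$, the objective reads $\Vert A\hat{\bm{\psi}} - \bm{b}\Vert_2^2$, whose normal equations are
\[
R M R^\top \bm{\psi} \;=\; R M \tilde{\bm{\phi}}^m, \qquad M := \mathcal{F}^* W \mathcal{F}.
\]
The vector $\tilde{\bm{\phi}}^m$ is real, as the first column of the real matrix $\Phi^m$, and $A$ inherits full column rank from the linear independence of the columns of $\mathcal{F}$ together with $W \succ 0$, so $RMR^\top$ is invertible. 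The claim therefore reduces to showing that the matrix $M$ is real.

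The realness of $M$ is the main content of the argument. Because the DFT diagonalizes circulant matrices, $M = \mathcal{F}^* W \mathcal{F}$ is, up to a positive normalization constant, a circulant matrix whose DFT-eigenvalues are the diagonal entries $w_k = w(|\lambda_k|)$ of $W$. A circulant matrix is real if and only if its DFT-eigenvalues are conjugate-symmetric under the index reversal $k \mapsto n_x - k$, since this is precisely the condition for the inverse DFT of the eigenvalue vector to yield a real first column (and hence real cyclic shifts in the remaining columns). Because each $w_k$ is already real, this condition collapses to the symmetry $w_{n_x - k} = w_k$, equivalently $|\lambda_{n_x - k}| = |\lambda_k|$. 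The latter holds because $\Phi$ is itself a real circulant matrix by Corollary~\ref{cor:circulant_Phi}, so its DFT-eigenvalues are conjugate-symmetric, $\lambda_{n_x - k} = \overline{\lambda_k}$, with equal magnitudes.

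I expect the main obstacle to lie in exactly this DFT-symmetry step: it is conceptually straightforward but requires some care with conventions (unitary vs.\ unnormalized DFT) and with the self-conjugate indices $k = 0$ and, for even $n_x$, $k = n_x/2$, neither of which affects the conclusion. Once $M$ is established to be real, both $RMR^\top$ and $RM\tilde{\bm{\phi}}^m$ are real, so the unique solution $\bm{\psi}$ of the normal equations lies in $\mathbb{R}^\nu$, completing the proof.
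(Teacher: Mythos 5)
Your proof takes essentially the same route as the paper's: pass to the normal equations, reduce realness of the solution to realness of the circulant matrix $\mathcal{F}^* W \mathcal{F}$, and establish that via the even symmetry $w_k = w_{n_x - k}$, which in turn follows from the conjugate symmetry of the spectrum of the real circulant $\Phi$. Your additional remark on the invertibility of $RMR^\top$ (which the paper leaves implicit) is a small but welcome tightening.
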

\begin{proof}
The normal equations of \eqref{eq:LSQ_1D_problem} are
\begin{align*}
\left( 
R {\cal F}^* W_{\bm{\lambda}} {\cal F} R^\top 
\right)
\bm{\psi} 
= 
\left( R {\cal F}^* W_{\bm{\lambda}} {\cal F} \right) \tilde{\bm{\phi}}^m.
\end{align*}
Since $R$ and $\tilde{\bm{\phi}}^m$ are real, $\bm{\psi}$ is real if the circulant matrix ${\cal A} := {\cal F}^* W_{\bm{\lambda}} {\cal F} = {\cal F}^* \diag(\bm{w}) {\cal F}$ is real. Letting $\tilde{\bm{a}}$ denote the 1st column of ${\cal A}$, then, because ${\cal A}$ is circulant, $\tilde{\bm{a}} = {\cal F}^* \bm{w}$; that is, $\tilde{\bm{a}}$ is the inverse DFT of $\bm{w}$. Appealing to properties of the inverse DFT, since $\bm{w}$ is real, $\tilde{\bm{a}}$ will be real if $\bm{w}$ has even symmetry, meaning that $w_{k} = w_{n_x-k}$. Using the explicit formula for the eigenvalues of circulant matrices, it is easy to verify that eigenvalues $\lambda_k$ of any real-valued circulant matrix $\Phi$ must satisfy $|\lambda_k|^2 = \lambda_k \lambda_{k}^* = \lambda_k \lambda_{-k} = \lambda_k \lambda_{n_x-k} = |\lambda_{n_x-k}|^2$. It follows that $\bm{w}$ is even since $w_k = w(|\lambda_k|) = w(|\lambda_{n_x-k}|) = w_{n_x - k}$ and, thus, ${\cal A}$ is real.
\end{proof}

In practice, the numerical solution of \eqref{eq:LSQ_1D_problem} is found to have some small imaginary components since ${\cal F}$ is complex and the problem is ill-conditioned. We simply truncate these components from the solution, as is justified by Lemma~\ref{lem:LSQ_1D_real_sol}. In some cases, the imaginary component can become large and simply truncating them from the solution has never been found to result in a good solver; see Table \ref{tab:LSQlin_ERK}. This is also observed for some other choices of the weight matrix $W$ leading to particularly ill-conditioned matrices in \eqref{eq:LSQ_1D_problem}. In practice, if an imaginary component larger than $10^{-8}$ is detected, we flag the results and do not accept the resulting $\Psi$ as a coarse-grid operator. We note, however, that this does not happen for the sparsity patterns of $\Psi$ that we advocate in the following sections.

\subsection{Explicit schemes: Selection of $\Psi$'s non-zero pattern}
\label{ssec:sparsity_selection}
Before solving \eqref{eq:LSQ_1D_problem}, we must first decide how to constrain the non-zero pattern of $\Psi$.
Our goal is to develop coarse-grid operators $\Psi$ that result in convergence in a small number of multigrid iterations, but that are sufficiently sparse to obtain a low cost per iteration. In multigrid, the cost per iteration is quantified by the so-called operator complexity,
which is defined in the case of MGRIT as the total amount of work done in time-stepping on all levels, relative to the time-stepping work on the finest level. The operator complexity depends on the sparsity of the coarse-grid operators $\Psi$.
Clearly, we require $\Psi$ to be significantly sparser than the exact coarse-grid operator $\Phi^m$,
so that time-stepping on the coarse grid is
substantially less expensive than on the fine grid.
Ideally, we would like the coarse operator $\Psi$ not to be denser than the fine-level operator $\Phi$, as would
result from rediscretizing the PDE on the coarse grid (where we note that the exact coarse propagator
$\Phi^m$ is much denser than $\Phi$), but our numerical results will show that constraining $\Psi$ to have as
few non-zeros as $\Phi$ does not yield good solvers in general. 
Still, it is useful to consider the case where $\texttt{nnz}(\Psi)=\texttt{nnz}(\Phi)$, with $\texttt{nnz}(A)$
denoting the number of non-zeros of matrix $A$, as a reference case that we will compare the per-iteration cost
of our operators $\Psi$ with.
To compute the operator complexity, let $\Phi_{\ell}$ denote the time-stepping operator on level $1 \leq \ell \leq L$ of a multilevel hierarchy with $L > 1$ levels, meaning that $\Phi \equiv \Phi_1$ and $\Psi \equiv \Phi_2$ in the two-level notation we have been using so far. Now, assuming $\Phi_{\ell}$ is a sparse operator, the work required to time-step with it is proportional to $\texttt{nnz}(\Phi_{\ell}) n_t m^{1 - \ell}$, assuming a constant coarsening factor of $m$ on all levels. Thus, the operator complexity is given by
\begin{align} \label{eq:OC}
\textrm{operator complexity} := \frac{1}{\texttt{nnz}(\Phi_1)} \sum \limits_{\ell = 1}^{L} m^{1-\ell} \texttt{nnz}(\Phi_{\ell}).
\end{align}
An efficient multigrid cycle should have an operator complexity that is bounded independently of $L$ (so that the cost of the work on the coarse levels relative to the fine-level work is bounded by a constant independent of the problem size and the number of levels). In fact, if one uses ideal coarse-grid operators, $\Phi_{\ell+1} = \Phi_{\ell}^m\, \forall \ell$, the solver will have an operator complexity equal to $L$. In contrast, in our reference case where $\texttt{nnz}(\Psi) = \texttt{nnz}(\Phi)$, a two-level solver has an operator complexity bounded by $1 + 1/m$, and a multilevel solver obeying this condition on all levels has a complexity bound of $1 + 1/(m-1)$. In the following sections, we will compare measured operator complexities for the coarse operators $\Psi$ we derive
with these reference complexities.

Next, we discuss how to choose the locations of the non-zeros in $\Psi$. 
Note that simply rediscretizing $\Phi$ on a temporally coarsened mesh leads to $\Psi$ having the same non-zero pattern as $\Phi$.
To motivate a better choice of sparsity pattern, we consider the effects of temporal coarsening on the exact solution of \eqref{eq:PDE} when it is sampled on a space-time mesh; a schematic diagram of this example is shown in Figure~\ref{fig:characteristic_coarsening}. The solution of a hyperbolic PDE is propagated through space-time along its characteristics, $x(t)$. Advection problem \eqref{eq:PDE} simply has characteristics that are straight lines with slope $\d x/\d t = \alpha$. Now, say we have an exact fine-grid time-stepping operator, $\Phi_{\rm exact}$, that advects the PDE solution along characteristics from one time level to the next. From the diagram, it is clear that $\Phi_{\rm exact}$ propagates the solution not only a distance of $\Delta t$ in time, but also a distance of $\Delta x$ in space. Considering semi-coarsening in time, by a factor of $m = 4$, for example, the resulting exact coarse-grid time-stepping operator is $\Psi_{\rm exact} = \Phi_{\rm exact}^4$. By definition, $\Psi_{\rm exact}$ propagates the solution forward in time by a distance of $4 \Delta t$; however, we see that it also propagates the solution a distance of $4 \Delta x$ in space. Thus, coarsening in the time direction, but not in space, has shifted the spatial stencil of $\Psi_{\rm exact}$ (which reaches back four points in space) with respect to that of $\Phi_{\rm exact}$ (which reaches back one point in space).
\begin{figure}[h!t]
\centering
\includegraphics[width=0.6\linewidth]{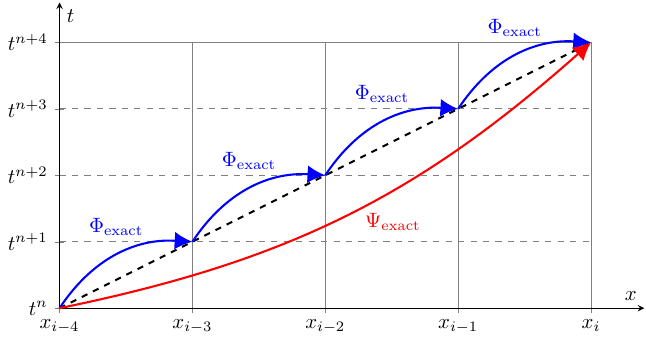}
\caption{
Exact fine- and coarse-grid time-stepping operators, $\Phi_{\rm exact}$ and $\Psi_{\rm exact}$, propagate the solution of $\partial_t u + \alpha \partial_x u = 0$ along one of its characteristics (thick, dashed line) on a fine and coarse grid, respectively. The fine grid has a temporal mesh spacing of $\Delta t = \Delta x/\alpha $, and the coarse grid of $4 \Delta t$, with coarse-grid points at $t^{n}$ and $t^{n+4}$.
\label{fig:characteristic_coarsening}
}
\end{figure}

From an algebraic perspective, it seems reasonable to consider $\Psi \approx \Phi^m$ having its sparsity pattern based on the largest non-zeros of $\Phi^m$. To assess this, we compute $\Phi^m$ and examine its non-zeros as a function of their diagonal index $i$ (recall entries of $\Phi^m$ are constant along its diagonals since it is circulant). We define diagonal index $i$ to be 0 on the main diagonal, negative below the main diagonal, and positive above the main diagonal. For $m \in \{16,64\}$, these are shown in Figure~\ref{fig:Psi_ideal_entries}. There is clearly a well-defined distribution in the magnitude of these non-zeros for each scheme. The distributions peak at different $i$ essentially because the time step is chosen differently for each scheme, since $c = 0.85 c_{\max}$ and $c_{\max}$ is different for each scheme (see Table~\ref{tab:CFL_limits}). In the plots, dashed lines represent $mc \Delta x$, which is the spatial distance travelled along a characteristic departing from $t^n$ and arriving at $t^{n+m}$. This illustrates, not unexpectedly, that the discretizations provide an approximation to the advection of the solution along characteristics that occurs at the PDE level. 

\begin{figure}[h!t]
\centerline{
\includegraphics[width = 0.44\textwidth]{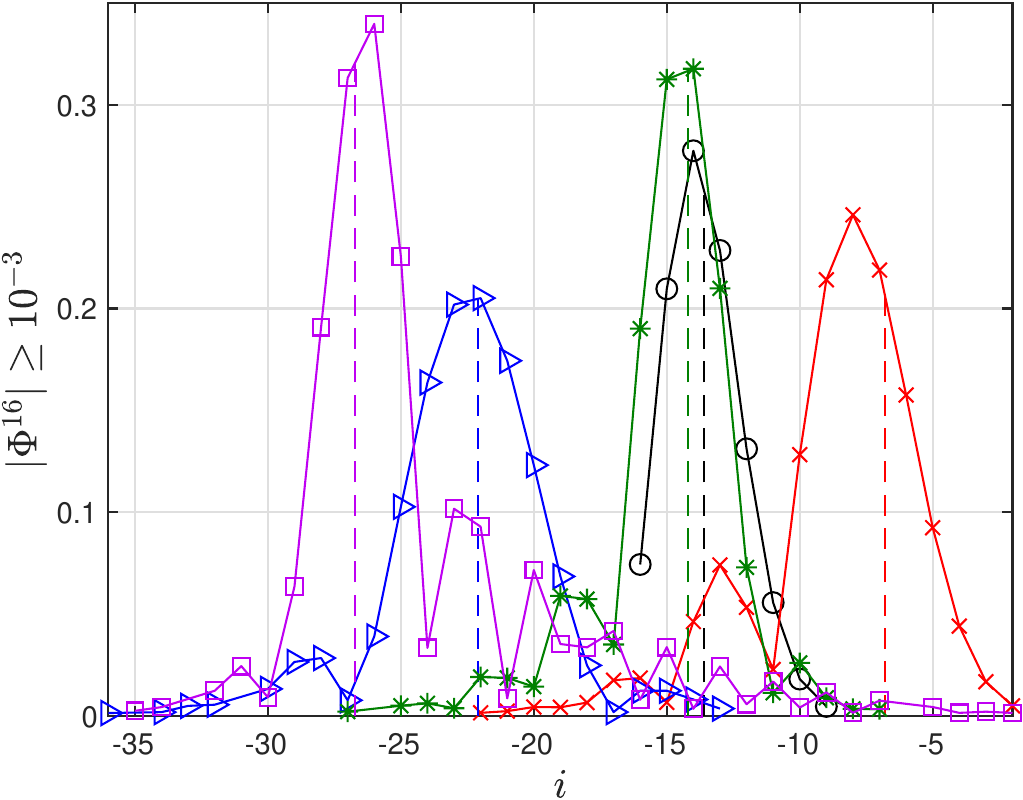}
\quad\quad
\includegraphics[width = 0.448\textwidth]{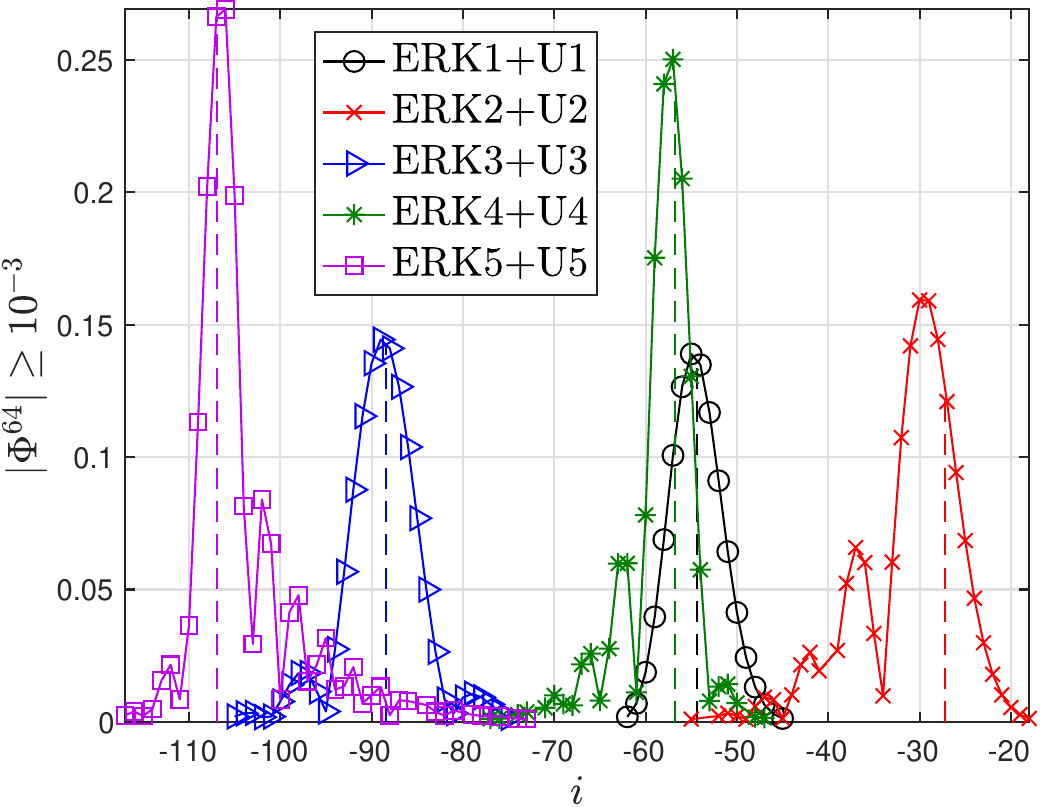}
}
\caption{
Magnitude of diagonal entries of $\Psi_{\rm ideal} := \Phi^m$, $m =16$ (left), and $m=64$ (right), that are larger than $10^{-3}$, as a function of their diagonal index, $i$. Fine-level discretizations, $\Phi$, are ERK$p$+U$p$, $p \in \{1,2,3,4,5\}$. Dashed vertical lines for each discretization are included to represent a distance of $-mc \Delta x$ from $i = 0$. Note that $c = 0.85 c_{\rm max}$ is different for each scheme. 
\label{fig:Psi_ideal_entries}
}
\end{figure}

From our previous intuitive arguments involving Figure~\ref{fig:characteristic_coarsening}, it is clear that the non-zero pattern of $\Psi$ should reflect the characteristic nature of the PDE; this is also the conclusion reached from an algebraic perspective of approximating $\Phi^m$ (Figure~\ref{fig:Psi_ideal_entries}). We note that Gander\cite{Gander2008} also argued that making use of characteristic information may be important. The specific sparsity patterns used for the ERK+U schemes will be discussed further in \S\ref{ssec:LSQ_results}.

\subsection{Explicit schemes: Two-level results}
\label{ssec:LSQ_results}
In this section, we show MGRIT convergence with $\Psi$ as the solution of least squares problem \eqref{eq:LSQ_1D_problem} for ERK discretizations. To demonstrate the validity of the ideas outlined in the previous section, we solve the least squares problem for $\Psi$ having a sparsity pattern equal to that of the fine-level operator $\Phi$, and for it having a sparsity pattern based on the
largest nonzeros of $\Phi^m$. We will first compare numerical results for these two approaches, and will then provide
details on how exactly we choose a sparsity pattern based on the largest nonzeros of $\Phi^m$.

Solver iteration counts for $\Psi$ having the same sparsity pattern as $\Phi$ are shown in the left side of Table~\ref{tab:LSQlin_ERK}. A convergent solver was not obtained for any $m$ for the 1st-order scheme, convergent schemes were obtained only for $m = 2$ for the 2nd- and 3rd-order schemes, and convergent schemes with $m \in \{2,4,8\}$ were obtained for the 4th- and 5th-order schemes. In all cases where convergent solvers were found, the iteration counts remain constant as the mesh is refined. For the cases where the solvers converge, these results are certainly an improvement on those using rediscretization, which are divergent in this setting due to coarse-level CFL instability (see \S\ref{ssec:motivating_examples}), attesting to the power of the optimization approach. However, for many coarsening factors and discretizations, the results are significantly worse than those obtained when using a sparsity pattern based on $\Phi^m$, as shown in the right side of Table~\ref{tab:LSQlin_ERK}. 
When considering the magnitude of entries in $\Phi$ and $\Phi^m$ for $m = 2,4,8$ (plots not shown here for brevity), the locations of non-zeros in $\Phi$ correspond primarily to the largest non-zeros in $\Phi^m$ for the cases in which the $\Phi$-based sparsity patterns yield convergent solvers.
Furthermore, least squares problem \eqref{eq:LSQ_1D_problem} was severely ill-conditioned for many instances in which $\Psi$ and $\Phi$ shared a sparsity pattern (see Table~\ref{tab:LSQlin_ERK}), but never when $\Psi$ and $\Phi^m$ shared a similar sparsity pattern, further supporting our argument that using a characteristic-based sparsity pattern is the better choice.
 
\begin{table}[tbhp] 
\caption{
Two-level iteration counts for ERK+U discretizations with $\Psi$ as linear least squares solution \eqref{eq:LSQ_1D_problem}. Left: Sparsity pattern of $\Psi$ is equal to that of $\Phi$. Right: Sparsity pattern of $\Psi$ is based on that of $\Phi^m$. An `\xmark' denotes a solve that did not converge to the required tolerance in significantly fewer than $n_t/(2m)$ iterations (i.e., the number of iterations at which the exact solution is reached). An `\xmark\textsuperscript{*}' denotes a solve in which the least squares solution had imaginary components larger than $10^{-8}$, as another indication of divergence, and an indication that the least squares problem was severely ill-conditioned.
\label{tab:LSQlin_ERK}
}
\begin{center}
\begin{tabular}{|c|c|cccccc V{3} cccccc|} 
\hline
\multirow{2}{*}{Scheme} 
&
\multirow{2}{*}{$n_x \times n_t$} 
& 
\multicolumn{6}{c V{3} }{$m$\; ($\Phi$-based sparsity)}
&
\multicolumn{6}{c|}{$m$\; ($\Phi^m$-based sparsity)} \\  \cline{3-14} 
&
& 
2 & 4 & 8 & 16 & 32 & 64
&
2 & 4 & 8 & 16 & 32 & 64
\\ 
\Xhline{2\arrayrulewidth}
\multirow{3}{*}{ERK1+U1} 
& $2^{8\phantom{1}} \times 2^{10}$ & 
\xmark & \xmark & \xmark & \xmark & \xmark & \xmark &
11 & 6 & 6 & 7 & 6 & 5
\\
& $2^{10} \times 2^{12}$ & 
\xmark & \xmark & \xmark & \xmark & \xmark & \xmark &
11 & 6 & 6 & 7 & 6 & 5
\\
& $2^{12} \times 2^{14}$ & 
\xmark & \xmark & \xmark & \xmark & \xmark & \xmark &
11 & 6 & 6 & 7 & 6 & 5
\\
\hline
\multirow{3}{*}{ERK2+U2} 
& $2^{8\phantom{1}} \times 2^{11}$ & 
10 & \xmark & \xmark & \xmark & \xmark & \xmark &
10 & 7 & 9 & 8 & 7 & 7
\\
& $2^{10} \times 2^{13}$ & 
10 & \xmark & \xmark & \xmark & \xmark & \xmark &
10 & 7 & 9 & 8 & 7 & 7
\\
& $2^{12} \times 2^{15}$ & 
10 & \xmark & \xmark & \xmark & \xmark & \xmark &
10 &  7 & 9 & 8 & 7 & 7
\\\hline
\multirow{3}{*}{ERK3+U3} 
& $2^{8\phantom{1}} \times 2^{9\phantom{1}}$ 
& 9 & \xmark & \xmark\textsuperscript{*} & \xmark\textsuperscript{*} & \xmark\textsuperscript{*} & \xmark\textsuperscript{*} &
7 & 6 & 5 & 6 & 5 & 3
\\
& $2^{10} \times 2^{11}$ 
& 9 & \xmark & \xmark\textsuperscript{*} & \xmark\textsuperscript{*} & \xmark\textsuperscript{*} & \xmark\textsuperscript{*} &
7 & 6 & 5 & 6 & 5 & 4
\\
& $2^{12} \times 2^{13}$ 
& 9 & \xmark & \xmark\textsuperscript{*} & \xmark\textsuperscript{*} & \xmark\textsuperscript{*} & \xmark\textsuperscript{*} &
7 & 6 & 5 & 6 & 5 & 4 
\\
\hline
\multirow{3}{*}{ERK4+U4} 
& $2^{8\phantom{1}} \times 2^{10}$ & 
6 & 4 & 8 & \xmark & \xmark\textsuperscript{*} & \xmark\textsuperscript{*} &
5 & 4 & 4 & 4 & 5 & 5
\\
& $2^{10} \times 2^{12}$ & 
6 & 4 & 8 & \xmark & \xmark\textsuperscript{*} & \xmark\textsuperscript{*} &
5 & 4 & 4 & 4 & 5 & 6
\\
& $2^{12} \times 2^{14}$ & 
6 & 4 & 8 & \xmark & \xmark\textsuperscript{*} & \xmark\textsuperscript{*} &
5 & 4 & 4 & 4 & 5 & 6
\\
\hline
\multirow{3}{*}{ERK5+U5} 
& $2^{8\phantom{1}} \times 2^{9\phantom{1}}$ & 
3 & 3 & 7 & \xmark\textsuperscript{*} & \xmark\textsuperscript{*} & \xmark\textsuperscript{*} &
3 & 3 & 3 & 4 & 4 & 3
\\
& $2^{10} \times 2^{11}$ & 
3 & 3 & 7 & \xmark\textsuperscript{*} & \xmark\textsuperscript{*} & \xmark\textsuperscript{*} &
3 & 3 & 3 & 4 & 5 & 4
\\
& $2^{12} \times 2^{13}$ & 
3 & 3 & 7 & \xmark\textsuperscript{*} & \xmark\textsuperscript{*} & \xmark\textsuperscript{*} &
3 & 3 & 3 & 4 & 5 & 4
\\
\hline
\end{tabular}
\end{center}
\end{table}

We now explain in detail how the sparsity patterns were chosen that lead to the results in the right-hand side of Table~\ref{tab:LSQlin_ERK}, and then give a general discussion about the solvers. To select this sparsity pattern for a given discretization and coarsening factor, we first look at the locations of the largest non-zeros in $\Psi_{\rm ideal}$ (as in Figure~\ref{fig:Psi_ideal_entries}, for example). As a first approximation, we choose a contiguous subset of the locations of the largest $\texttt{nnz}(\Phi)$ non-zeros of $\Phi^m$ (even if the locations of the largest non-zeros are not contiguous). The solver is then tested at multiple grid resolutions to determine if it is scalable. If the solver is not scalable, then an extra non-zero is included in a contiguous fashion and it is retested; this process is repeated until a scalable solver is obtained. 

Additionally, once a scalable solver has been found, if it is determined that the convergence is significantly improved by including a relatively small number of additional non-zeros (e.g., two or three), then that is done also. Note, however, that the convergence rate has not been rigorously optimized as a function of the number of non-zeros. As an example, the left panel of Figure~\ref{fig:sparsity_pattern} shows the non-zero patterns of $\Psi$ selected for ERK3+U3 as a function of coarsening factor, $m$. Also plotted is the coarse-grid characteristic departure point of $-mc$, demonstrating how the sparsity patterns are correlated with the departure points. Figure~\ref{fig:sparsity_pattern} also shows (right panel), for each discretization, the operator complexities \eqref{eq:OC} of the resulting solvers along with the operator complexity of $1 + 1/m$ that results when $\texttt{nnz}(\Psi) = \texttt{nnz}(\Phi)$ in a two-level method (see \S \ref{ssec:sparsity_selection}).

We find that, in general, to obtain convergent and scalable solvers there has to be a slight increase in the number of non-zeros in $\Psi$ as the coarsening factor is increased, as can be seen for ERK3+U3 in Figure~\ref{fig:sparsity_pattern} (left panel), for example. This behavior appears consistent with that seen in Yavneh\cite{Yavneh1998} for the multigrid solution of steady-state advection-dominated PDEs, in which it was shown that coarse-grid operators may require a wider stencil than fine-grid operators. The number of additional non-zeros required is smaller for higher-order discretizations, as seen in the right panel of Figure~\ref{fig:sparsity_pattern}, where operator complexities tend to be smaller for higher-order methods. Notice that ERK4+U4 and ERK5+U5 essentially have operator complexities of $1 + 1/m$ since very few (if any) additional non-zeros were needed.

\begin{figure}
\centerline{
\includegraphics[width=0.425\textwidth]{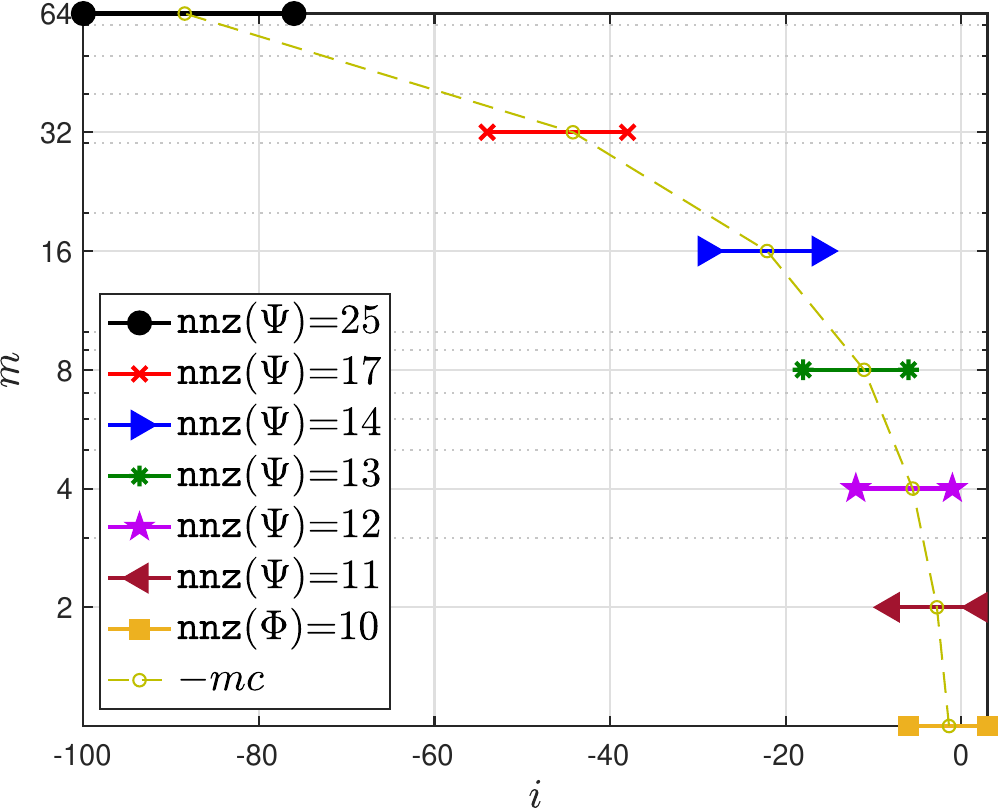}
\quad\quad
\includegraphics[width=0.435\textwidth]{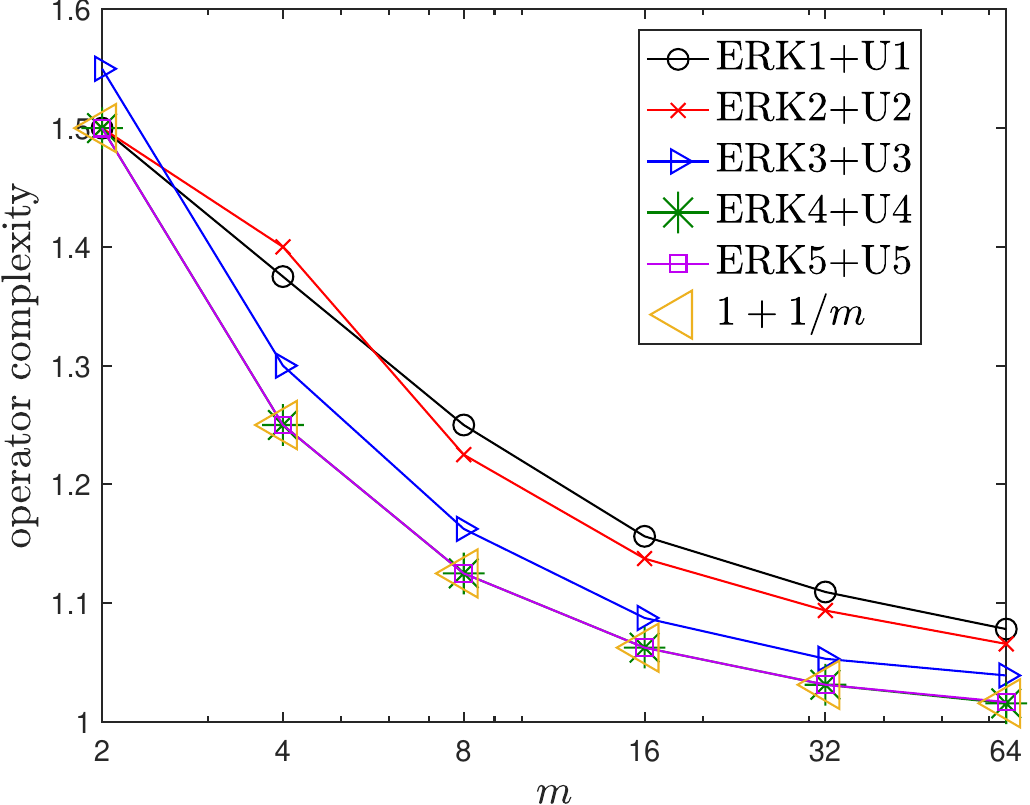}
}
\caption{
Two-level solvers for $m \in \{2,4,8,16,32,64\}$ with $\Psi$ as linear least squares solution \eqref{eq:LSQ_1D_problem}. Left: Sparsity patterns chosen for $\Psi$ for ERK3+U3, as represented by the non-zero diagonal indices $i$ for each value of $m$. Also plotted is $-mc$, which represents the characteristic departure point on a grid using a time-step of $m \Delta t$. Right: Operator complexities \eqref{eq:OC} for all discretizations; shown also is the reference operator complexity of $1 + 1/m$.
\label{fig:sparsity_pattern}
}
\end{figure}

The results at the right of Table~\ref{tab:LSQlin_ERK} show that it is possible to overcome the CFL instability that arises from rediscretizing the fine-grid discretization on a temporally coarsened mesh and to obtain very fast two-level convergence and, therefore, show the significance of using a characteristic-based sparsity pattern for $\Psi$.
Notably, the convergence rates of the solvers shown in Table~\ref{tab:LSQlin_ERK} are comparable to those for model diffusion problems using rediscretized coarse-grid operators.\cite{Dobrev_etal_2017, Falgout_etal_2014} To the best of our knowledge, these are the first scalable results obtained with a two-level time-coarsening algorithm for the explicit discretization of any hyperbolic PDE using realistic CFL numbers, and also for moderately-large coarsening factors. Interestingly, convergence rates tend to be faster for higher-order discretizations compared with those of lower order. When combined with the trend in Figure~\ref{fig:sparsity_pattern} (right panel) that operator complexities are smaller for higher-order discretizations, this suggests that higher-order discretizations of model problem \eqref{eq:PDE} likely benefit more from parallel-in-time integration.

Finally, an example of the eigenvalues and entries of $\Psi$ for ERK3+U3 with $m = 8$ is shown in Figure~\ref{fig:LSQ_Psi_example}. In this example, the eigenvalues of $\Phi^8$ are clearly very well approximated by the eigenvalues of $\Psi$ when they are of order one (in magnitude), and not so well approximated when they are smaller.  
Given this behavior, it is unsurprising that the solver converges quickly, and that the associated error bounds are small (bottom right panel of Figure~\ref{fig:LSQ_Psi_example}). The entries of the least squares $\Psi$ (upper right panel of Figure~\ref{fig:LSQ_Psi_example}) are clearly correlated with those of the ideal operator.

\begin{figure}
\centerline{
\begin{minipage}{0.5\textwidth}
\includegraphics[width=0.85\textwidth]{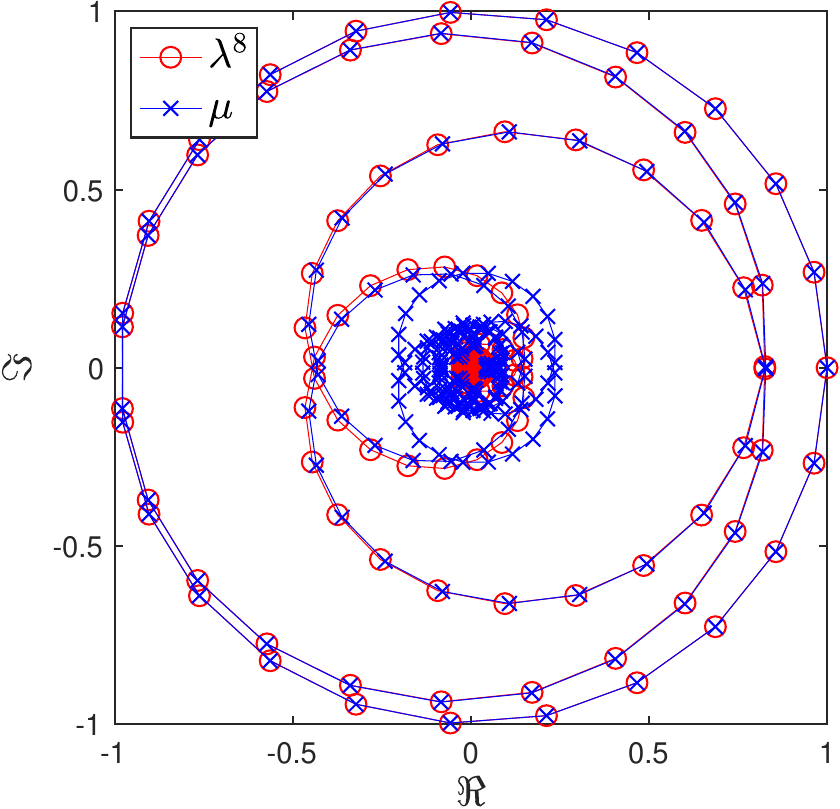}
\end{minipage}
\begin{minipage}{0.5\textwidth}
\vspace{-0ex}
\includegraphics[width=0.85\textwidth]{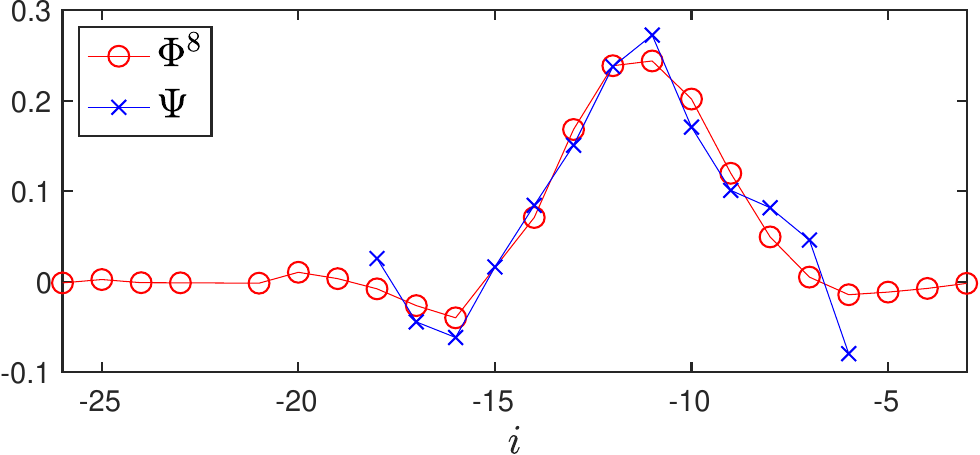}
\\[2ex]
\includegraphics[width=0.85\textwidth]{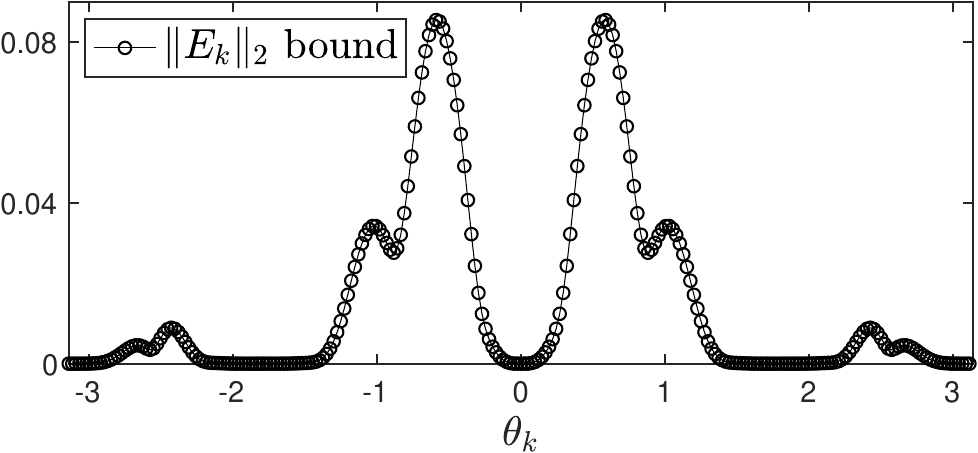}
\end{minipage}
}
\caption{
Linear least squares solution \eqref{eq:LSQ_1D_problem} for ERK3+U3 with $n_x \times n_t = 2^8 \times 2^9$, coarsening factor $m = 8$, and the sparsity pattern of the least-squares determined $\Psi$ based on that of $\Psi_{\rm ideal} = \Phi^8$. Left: Eigenvalues $\lambda^8$ of $\Phi^8$, and $\mu$ of $\Psi$. Top right: Entries of $\Phi^8$ with magnitude larger than $10^{-3}$ as a function of their diagonal index, $i$, and all entries of $\Psi$. Bottom right: Error bound \eqref{eq:Dobrev_bounds} as a function of spatial Fourier frequency. \label{fig:LSQ_Psi_example}
}
\end{figure}

\begin{remark}[Relation between $\Psi$ and semi-Lagrangian discretizations]
It is clear that the optimized coarse operators $\Psi$ with $\Phi^m$-based sparsity have a 
non-local stencil structure that has a very different support than standard Eulerian discretizations
like the ERK+U schemes.
However, the stencil support of these coarse operators $\Psi$ is similar to the stencil support
of semi-Lagrangian schemes, since it tracks the characteristic curves of the PDE.
This begs the question whether the optimized operators $\Psi$ we obtain with $\Phi^m$-based sparsity
imposed may be close to semi-Lagrangian discretizations.
It would indeed be useful for the sake of developing more practical approaches if our
optimized operators $\Psi$ could be replaced by semi-Lagrangian discretizations,
without substantially degrading the convergence speed.
However, since the stencil of $\Psi$ has to be increased with coarsening factor $m$ to get a
scalable solver (see Figure~\ref{fig:sparsity_pattern}), $\Psi$ clearly does not just represent a
particular semi-Lagrangian discretization of the PDE on coarse grids.
This suggests that using standard semi-Lagrangian coarse operators may by itself not be sufficient
to obtain scalable solvers, which is consistent with the findings in Schmitt et al.,\cite{Schmitt_etal_2018}
where fast convergence was not obtained for a hyperbolic problem using a semi-Lagrangian
coarse-grid operator. Nonetheless, exploring semi-Lagrangian coarse-grid operators presents
a promising area for future research.
\end{remark}

\subsection{Explicit schemes: Multilevel results}
\label{ssec:LSQ_multilevel}

In this section, we extend the results from above to define an effective hierarchy of coarse-grid operators for multilevel solvers. The scalability of the two-level solvers considered in the previous section is limited because they require the sequential solve of a large coarse-grid problem. Conversely, multilevel solvers are more scalable because the temporal grid can be coarsened gradually over many levels until the coarsest level contains sufficiently few degrees of freedom that a sequential solve there does not present a significant bottleneck. 

Convergence theory of multilevel MGRIT is significantly more complicated than in the two-level setting\cite{Hessenthaler_etal_2020} and, so, rather than approximately minimizing a multilevel convergence estimate akin to what we did in the two-level case, we simply consider applying our previous two-level strategy in a recursive fashion. That is, if level $\ell$ uses a time-stepping operator $\Phi_{\ell}$, and coarsens by a factor of $m$, then the ideal time-stepping operator on level $\ell+1$, $\Psi_{\textrm{ideal}, \ell+1} := \Phi^m_{\ell}$, is approximated  with linear least squares problem \eqref{eq:LSQ_1D_problem}. As previously, sparsity patterns of coarse-grid operators are selected by roughly choosing some subset of the locations of the largest non-zeros in the corresponding ideal coarse-grid operator. Again, we try to strike some balance between the overall convergence rate of the solver and the amount of fill-in of the coarse-grid operators. 

For the sake of brevity, we only show results for ERK$p$+U$p$, $p \in \{1,3,5\}$. We have considered both V- and F-cycles using coarsening factors of both $m = 2$ and $m = 4$. However, only results for V-cycles using $m = 4$ coarsening are shown here because we found that this combination typically resulted in the fastest parallel solvers  (see \S \ref{sec:parallel}). For the case of ERK1+U1, we coarsen down to a minimum of just four points on the coarsest grid in time, and for ERK3+U3 and ERK5+U5, we coarsen down to a minimum of eight points.

The iteration counts for the resulting solvers are shown in Table \ref{tab:LSQ_multilevel} as a function of mesh resolution and number of grid levels. For all three discretizations, the solvers appear scalable with respect to the number of levels in the grid hierarchy and the mesh size, and they are very fast. We find that to obtain scalable solvers, the number of non-zeros in coarse-grid operators has to increase relative to that of the operator on the previous level. Similarly to the two-level case (see Figure \ref{fig:sparsity_pattern}), the amount of fill-in required decreases with increasing discretization order, as seen by the operator complexities also shown in the table. Importantly, the operator complexities converge to a constant as the number of levels is increased, which, when taken with the scalable iteration counts, indicates that the amount of work to solve a given problem is independent of the number of grid levels. 

To the best of our knowledge, this is the first time that scalable multilevel results have been obtained for the explicit discretization of any hyperbolic PDE using a realistic CFL fraction. For example, Howse et al.\cite{Howse_etal_2019} is the only other work to show multilevel MGRIT results (with spatial coarsening) for explicit discretizations of hyperbolic problems, yet results presented there were limited to first-order accuracy, used a smaller CFL fraction, and even with the use of F-cycles, were not scalable with respect to mesh size. Furthermore, convergence was slow, with on the order of 40 iterations required to reach convergence for the mesh sizes considered here. 

\begin{table}[tbhp] 
\caption{
Multilevel iteration counts as a function of number of grid levels for V-cycles with $m = 4$ coarsening on each level. Operator complexities (OC) \eqref{eq:OC} are also given for each discretization. Note the reference operator complexity for a multilevel method is bounded above by $1 + 1/(m-1) = 1.33\ldots$ for $m = 4$ (see \S \ref{ssec:sparsity_selection}). A `--' denotes a hierarchy that would have coarsened to fewer than the prescribed minimum number of allowable points (four for ERK1+U1, and eight for ERK3+U3 and ERK5+U5).
\label{tab:LSQ_multilevel}
}
\begin{center}
\begin{tabular}{|c|c|cccccc|} 
\hline
\multirow{2}{*}{Scheme} 
&
\multirow{2}{*}{$n_x \times n_t$} 
& 
\multicolumn{6}{c|}{Number of levels} 
\\ 
\cline{3-8}
&
&
2 & 3 & 4 & 5 & 6 & 7 
\\
\Xhline{2\arrayrulewidth} 
\multirow{4}{*}{ERK1+U1} 
& $2^{8\phantom{1}} \times 2^{10}$ & 6 & 6 & 6 & 6 & -- & -- \\ 	
& $2^{10} \times 2^{12}$ & 6 & 7 & 7 & 7 &  7 & -- \\    
& $2^{12} \times 2^{14}$ & 6 & 7 & 7 & 7 &  7 & 7 \\	 
& OC & 1.38 & 1.56 & 1.65 & 1.69 & 1.71 & 1.71 \\
\hline 
\multirow{4}{*}{ERK3+U3} 
& $2^{8\phantom{1}} \times 2^{9\phantom{1}}$ & 6 & 7 & 7 & -- & -- & --  \\ 
& $2^{10} \times 2^{11}$ & 6 & 7 & 7 & 7 & -- & -- \\  
& $2^{12} \times 2^{13}$ & 6 & 7 & 7 & 8 & 8 & -- \\  
& OC & 1.28 & 1.35 & 1.38 & 1.38 & 1.39 & -- \\
\hline
\multirow{4}{*}{ERK5+U5} 
& $2^{8\phantom{1}} \times 2^{9\phantom{1}}$ & 3 & 4 & 4 & -- & --	& --	\\	
& $2^{10} \times 2^{11}$ & 3 & 4 & 5 & 5 & -- & --	\\	
& $2^{12} \times 2^{13}$ & 3 & 4 & 5 & 5 & 5 & --	\\	
& OC & 1.25 & 1.31 & 1.33 & 1.33 & 1.33 & -- \\
\hline
\end{tabular}
\end{center}
\end{table}

\subsection{Explicit schemes: Application to inflow/outflow boundaries}
\label{ssec:LSQ_inflow}
Discretizations of \eqref{eq:PDE} subject to inflow/outflow spatial boundary conditions  result in non-circulant $\Phi$. Therefore, the optimization techniques discussed in previous sections cannot be rigorously applied since they rely on $\Phi$ and $\Psi$ being circulant. Moreover, such $\Phi$ are non-normal and likely not even diagonalizable, and, so, the rigorous optimization of the corresponding $\Psi$ would require more sophisticated convergence theory than in both Dobrev et al.\cite{Dobrev_etal_2017} and Hessenthaler et al.,\cite{Hessenthaler_etal_2020} such as that of Southworth\cite{Southworth2019}, for example, and would certainly be highly nonlinear.

In the spirit of local Fourier analysis of multigrid methods,\cite{Trottenberg_etal_2001} we hypothesize that the $\Psi$ we have previously designed for the periodic problem may work well for the inflow/outflow problem since the fine-grid operators $\Phi$ share the same Toeplitz structure away from the boundaries. To test this, we apply MGRIT to inflow/outflow problems using coarse-grid operators as described in the previous sections that were designed for analogous periodic problems (i.e., same discretizations, CFL numbers, $n_x$, and $m$), and we truncate these at the boundaries such that they are no longer circulant but remain Toeplitz. Note that not truncating the operators leads to similar, but slightly less satisfactory results. Given their non-zero pattern, truncating these operators at boundaries results in strictly lower triangular matrices in almost all cases.

In our tests, an inflow boundary condition $u(-1,t) = \sin^4(\pi t)$ is prescribed at $x = -1$. At the outflow boundary $x = 1$, no boundary condition is specified since the solution simply propagates out of the domain along characteristics. To implement the outflow numerically, however, we extrapolate from inside the computational domain to ghost points since solution data at ghost points is required by the 3rd-order and higher spatial discretizations. To preserve the global $p$th-order accuracy of each spatial discretization, we employ $p$th-order extrapolation.
While the inflow condition leads to a solution that mimics the periodic solution (they converge to the same solution as the mesh is refined), this does not influence the convergence of MGRIT, which is independent of the solution for linear problems.\cite{Southworth2019} For the numerical implementation of boundary conditions, sufficiently accurate extrapolation is used at the outflow boundary; at the inflow boundary, sufficiently accurate ERK stage values are computed using ideas similar to those in Carpenter et al.,\cite{Carpenter_etal_1993} except we elect to use the same spatial discretization right up to the boundary rather than switching to a compact one. To approximate solution and ERK stage values at ghost points, we employ truncated Taylor series about the boundary and use the PDE with the `inverse Lax--Wendroff' procedure.\cite[p. 364]{Hesthaven2017} For each scheme, numerical tests (not shown here) verify that convergence at the theoretically predicted rates is achieved. Tests also indicate that CFL limits are very similar to their analogues with periodic boundaries (Table~\ref{tab:CFL_limits}).

Iteration counts for the inflow/outflow problem discretized with ERK$p$+U$p$, $p \in \{1,3,5\}$, are given in Table~\ref{tab:LSQlin_ERK_inflow}. Two-level solvers using different coarsening factors $m$, and multilevel V-cycles using $m = 4$ on each level are considered. On average, the results are very similar to those for the analogous periodic problems (right side of Table~\ref{tab:LSQlin_ERK} for two level, and Table~\ref{tab:LSQ_multilevel} for multilevel). The optimized coarse-grid operators for the periodic problem therefore also make excellent coarse-grid operators for the inflow/outflow problem despite them not being designed to do so in any rigorous sense. These results indicate that the issues hindering convergence for hyperbolic problems in the simpler periodic setting, where $\Phi$ and $\Psi$ are normal matrices, are also responsible for poor convergence in this more complicated setting. 
\begin{table}[tbhp] 
\caption{
Iteration counts for ERK+U discretizations of \eqref{eq:PDE} with inflow/outflow boundaries; $\Psi$ is given by truncating the circulant matrix resulting from linear least squares solution \eqref{eq:LSQ_1D_problem}, with its sparsity pattern based on that of $\Psi_{\rm ideal}$. Left: Two-level solves as a function of coarsening factor $m$. Right: $\ell$-level V-cycle solves using $m = 4$ coarsening on each level. A `--' denotes a hierarchy that would have coarsened to fewer than the prescribed minimum number of allowable points (four for ERK1+U1, and eight for ERK3+U3 and ERK5+U5).
\label{tab:LSQlin_ERK_inflow}
}
\begin{center}
\begin{tabular}{|c|c|cccccc|cccccc|} 
\hline
\multirow{2}{*}{Scheme} 
&
\multirow{2}{*}{$n_x \times n_t$} 
& 
\multicolumn{6}{c|}{Two level, $m$} 
&
\multicolumn{6}{c|}{Multilevel, $\ell$} \\ \cline{3-14}
&
& 
2 & 4 & 8 & 16 & 32 & 64 
&
2 & 3 & 4 & 5 & 6 & 7 
\\ 
\Xhline{2\arrayrulewidth}
\multirow{3}{*}{ERK1+U1} &
$2^{8\phantom{1}} \times 2^{10}$ & 
10 & 6 & 6 & 6 & 5 & 3 
&
6 & 6 & 6 & 6 & -- & -- 
\\
&
$2^{10} \times 2^{12}$ & 
11 & 6 & 6 & 7 & 6 & 5 
&
6 & 6 & 6 & 6 & 6 & --  
\\ 
&
$2^{12} \times 2^{14}$ & 
11 & 6 & 6 & 7 & 6 & 5 
&
6 & 7 & 7 & 7 & 7 & 7  
\\ 
\hline
\multirow{3}{*}{ERK3+U3} &
$2^{8\phantom{1}} \times 2^{9\phantom{1}}$ & 
7 & 6 & 5 & 5 & 4 & 2 
&
6 & 6 & 6 & -- & -- & -- 
\\ 
&
$2^{10} \times 2^{11}$ & 
7 & 6 & 5 & 6 & 5 & 4 
&
6 & 7 & 7 & 7 & -- & -- 
\\ 
&
$2^{12} \times 2^{13}$ & 
7 & 6 & 5 & 6 & 5 & 4 
&
6 & 7 & 7 & 7 & 7 & -- 
\\
\hline
\multirow{3}{*}{ERK5+U5} &
$2^{8\phantom{1}} \times 2^{9\phantom{1}}$ & 
8 & 5 & 4 & 4 & 4 & 2 
&
5 & 5 & 5 & -- & -- & -- 
\\ 
&
$2^{10} \times 2^{11}$ & 
7 & 5 & 3 & 4 & 4 & 4 
&
5 & 5 & 5 & 5 & -- & -- 
\\
&
$2^{12} \times 2^{13}$ & 
7 & 5 & 3 & 4 & 4 & 4 
&
5 & 5 & 5 & 5 & 5 & -- 
\\ 
\hline
\end{tabular}
\end{center}
\end{table}

\subsection{Implicit schemes}
\label{ssec:implicit}
We now consider linear least squares problem \eqref{eq:LSQ_1D_problem} for the computation of suitable coarse-grid operators $\Psi$ for SDIRK+U discretizations of \eqref{eq:PDE}. For such discretizations, $\Phi$ is a rational function of sparse matrices and so, too, is $\Psi_{\rm ideal} := \Phi^m$. Naturally, one might seek a $\Psi$ that is also of this form. However, it is not obvious how this should be done, with one complication being the choice of sparsity patterns for the numerator and denominator. Consequently, we take a different approach here. 

Since $\Phi$ is a rational function of sparse matrices, it can also be written as a dense matrix. To assess to what extent $\Phi$ and $\Phi^m$ do globally couple the solution, we consider the magnitude of their entries as a function of their diagonal index, as pictured in Figure~\ref{fig:Psi_ideal_entries_SDIRK} for $m \in \{ 16,64 \}$. These plots show that, despite $\Phi$ and $\Phi^m$ being dense, they effectively act as sparse matrices, with their largest non-zeros having a sharp peak that is correlated with the characteristic departure point (shown as the dashed vertical line). The one exception here is SDIRK1+U1, whose entries are significantly less peaked than the other discretizations. As in previous examples, this is consistent with this discretization being very dissipative and not capturing the non-dissipative nature of \eqref{eq:PDE} well. Indeed, the plots in Figure~\ref{fig:Psi_ideal_entries_SDIRK} are qualitatively similar to their analogues for the ERK schemes in Figure~\ref{fig:Psi_ideal_entries} (noting the curves sit over the top of one another in the SDIRK case because the same CFL number of $c=4$ is used for every implicit discretization).

\begin{figure}[h!t]
\centerline{
\includegraphics[width = 0.44\textwidth]{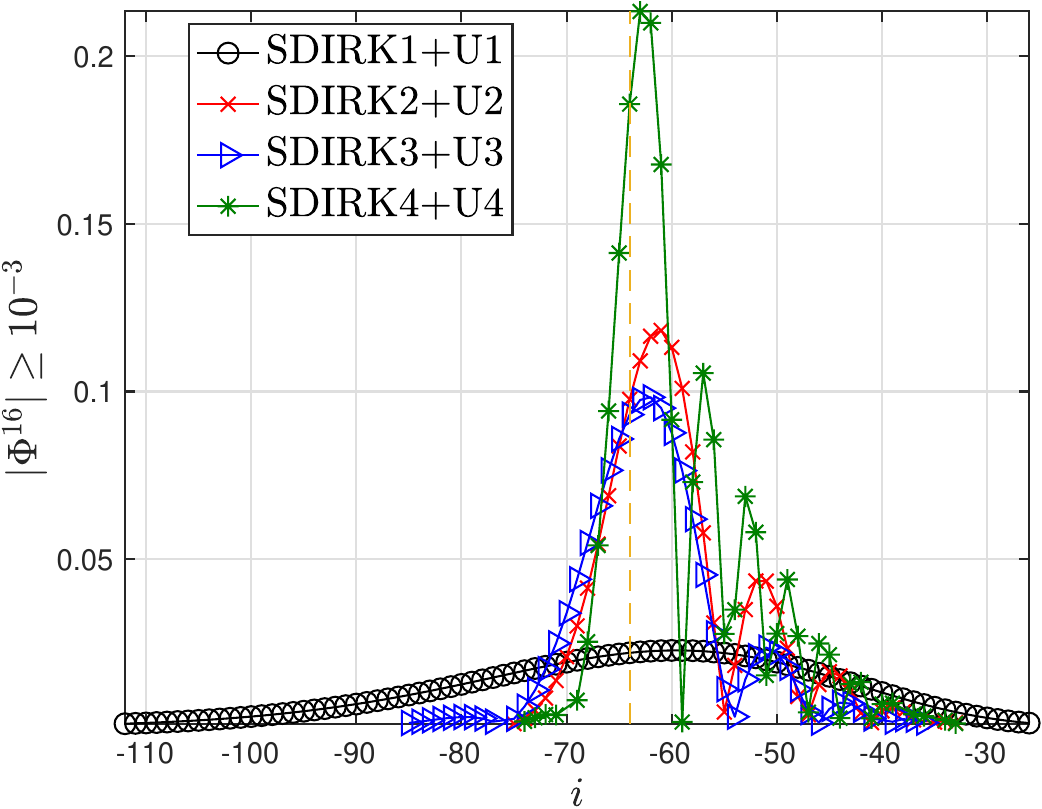}
\quad\quad
\includegraphics[width = 0.44\textwidth]{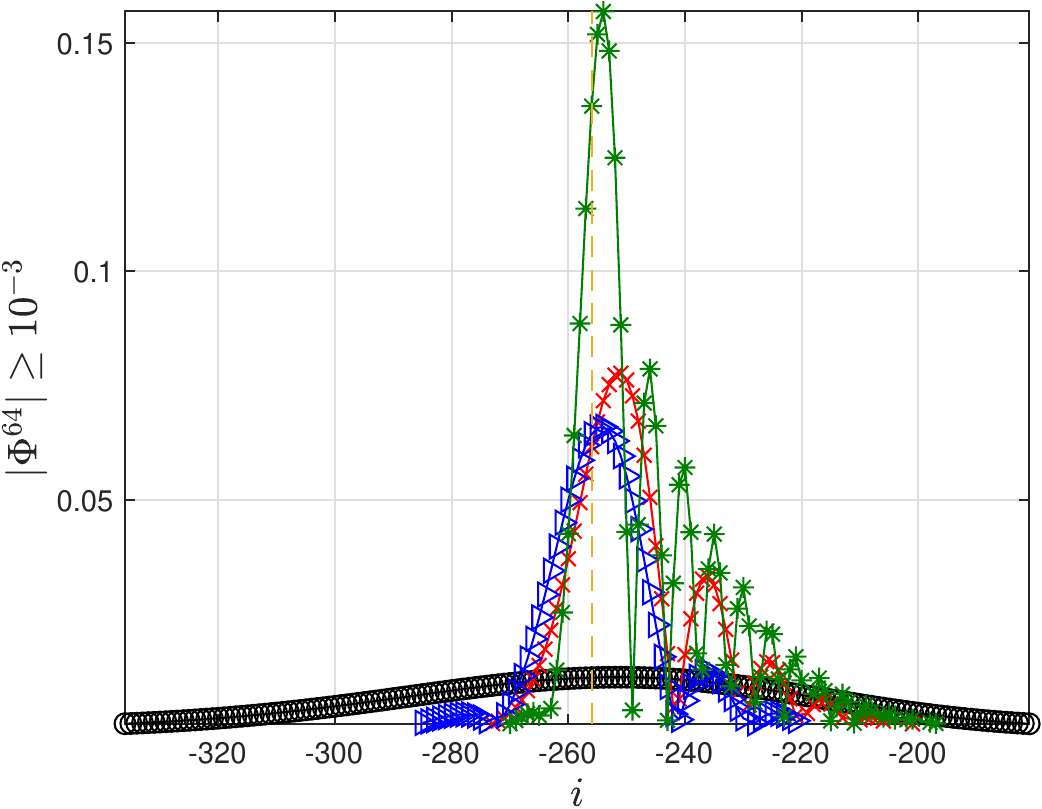}
}
\caption{
Magnitude of diagonal entries of the dense matrices $\Psi_{\rm ideal} := \Phi^m$, $m=16$ (left), and $m=64$ (right), that are larger than $10^{-3}$, as a function of their diagonal index, $i$. Fine-level discretizations, $\Phi$, are SDIRK$p$+U$p$, $p \in \{1,2,3,4\}$. A value of $n_x = 2^{10}$ has been used here. In each plot, a dashed vertical line is included to represent a distance of $-4 m \Delta x$ from $i = 0$ (these schemes use CFL number of $c = 4$). 
\label{fig:Psi_ideal_entries_SDIRK}
}
\end{figure}

The effectively sparse structure of $\Psi_{\rm ideal}$---as shown in Figure~\ref{fig:Psi_ideal_entries_SDIRK}---begs the question: Can a sparse (or equivalently, explicit) coarse-grid operator $\Psi$ be used to approximate it? The use of an explicit coarse-grid operator with an implicit fine-grid discretization is certainly not standard, and in fact, the reverse case has been used elsewhere in the literature: An implicit coarse-grid operator has been coupled with an explicit fine-grid discretization since it is a natural way of ensuring that the coarse-grid operator is stable.\cite{Falgout_etal_2014} However, quasi-tracking the solution of the PDE along characteristics---as done in the previous sections---is another way of ensuring the coarse-grid operator is stable, since the physical domain of dependence is included in the numerical domain of dependence. 

We now test the idea of using a sparse $\Psi$ to approximate a dense $\Phi^m$. As for the ERK discretizations, we place a restriction on the number of non-zeros in $\Psi$. To do so, we compute the entries in the 1st column of $\Phi^m$ (this can be done using the FFT and its inverse), and then we select a non-zero pattern using thresholding. That is, recalling $\tilde{\bm{\phi}}^m$ is the (dense) first column of $\Phi^m$, we take the non-zero pattern to be that of the entries with magnitude at least equal to $\eta_{\rm tol} \times \max_{k} |\tilde{\phi}^m_{k} |$, in which $\eta_{\rm tol} < 1$. We find that smaller values of $\eta_{\rm tol}$ lead to more quickly converging MGRIT solvers. As for the ERK schemes, we have loosely tried to achieve some balance between the rate of convergence and the number of non-zeros in $\Psi$, but this has not been fully optimized. For each discretization and coarsening factor, $m$, we allow for a different value of $\eta_{\rm tol}$. For $m = (2,4,8,16,32,64)$ the values for the $p$th-order SDIRK+U scheme are: $p = 1$, $\eta_{\rm tol} = (.1,.125,.25,.5,.5,.6)$;  $p = 2$,  $\eta_{\rm tol} = (.05,.1,.1,.2,.2,.2)$; $p = 3$, $\eta_{\rm tol} = (.005,.01,.02,.02,.02,.04)$; and $p = 4$, $\eta_{\rm tol} = (.005,.01,.01,.01,.02,.02)$. These choices of $\eta_{\rm tol}$ result in coarse-grid operators that have on the order of the same number of entries shown in the plots in Figure~\ref{fig:Psi_ideal_entries_SDIRK}. Figure~\ref{fig:nPsi_diags_SDIRK} shows the number of non-zeros per row of $\Psi$ as a function of the coarsening factor and how there is, in general, some growth in this number with $m$, just as there is in the number of non-zeros in $\Phi^m$ whose magnitude is significant (Figure~\ref{fig:Psi_ideal_entries_SDIRK}).
\begin{figure}[h!t]
\centerline{\includegraphics[width=0.45\textwidth]{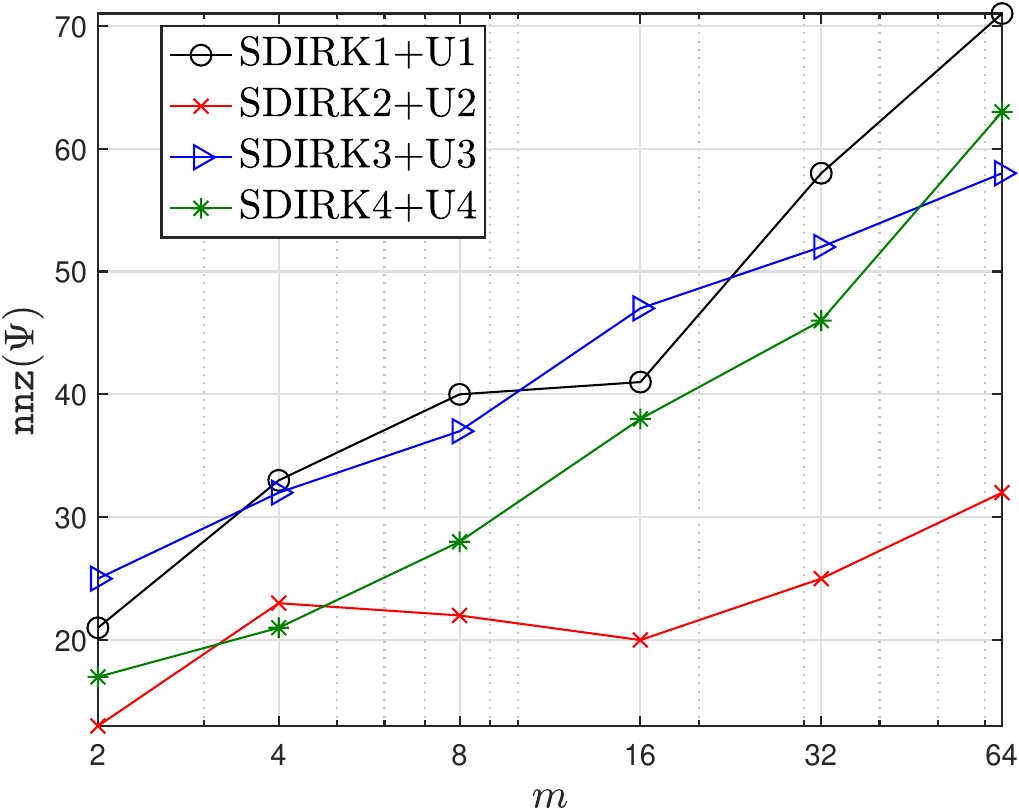}}
\caption{
Number of non-zeros per row of $\Psi$ for SDIRK+U discretizations as a function of coarsening factor, $m$. 
\label{fig:nPsi_diags_SDIRK}
}
\end{figure}

The iteration counts for the solvers are shown in Table~\ref{tab:LSQlin_SDIRK}. Convergence is fast for all coarsening factors, and the solvers appear scalable as the mesh is refined. This is in stark contrast to the results in Table~\ref{tab:examples_IRK_redisc} where rediscretizing $\Phi$ on the coarse grid resulted in a divergent solver for all discretizations except for SDIRK1+U1, reinforcing the idea that there exist significantly better coarse-grid operators for advection problem \eqref{eq:PDE} than those offered by rediscretizing the PDE on the coarse grid. Furthermore, these results confirm that despite $\Phi^m$ being a dense operator for the implicit temporal discretizations considered here, it can be well approximated by a sparse one.

\begin{table}[tbhp] 
\caption{
Two-level iteration counts for SDIRK+U discretizations with $\Psi$ given as linear least squares solution \eqref{eq:LSQ_1D_problem}. 
\label{tab:LSQlin_SDIRK}
} 
\begin{center}
\begin{tabular}{|c|c|cccccc|} 
\hline
\multirow{2}{*}{Scheme} 
&
\multirow{2}{*}{$n_x \times n_t$} 
& 
\multicolumn{6}{c|}{$m$} \\ \cline{3-8}
&
& 
2 & 4 & 8 & 16 & 32 & 64
\\ \Xhline{2\arrayrulewidth}
\multirow{2}{*}{SDIRK1+U1} 
& $2^{10} \times 2^{10}$ & 10 & 7 & 8 & 10 & 8 & 7\\
& $2^{12} \times 2^{12}$ & 10 & 7 & 8 & 11 & 9 &  9\\
\hline
\multirow{2}{*}{SDIRK2+U2} 
& $2^{10} \times 2^{10}$ & 10 & 8 & 7 & 8 & 8 & 7\\ 
& $2^{12} \times 2^{12}$ & 11 & 8  & 7 & 8 & 8 & 8\\ 
\hline
\multirow{2}{*}{SDIRK3+U3} 
& $2^{10} \times 2^{10}$ & 5 & 5 & 5 & 4 & 4 & 4 \\ 
& $2^{12} \times 2^{12}$ & 5 & 5 & 5 & 4 & 4 & 4 \\ 
\hline
\multirow{2}{*}{SDIRK4+U4} 
& $2^{10} \times 2^{10}$ & 6 & 6 & 5 & 5 & 5 & 5 \\ 
& $2^{12} \times 2^{12}$ & 6 & 6 & 5 & 5 & 5 & 5 \\ 
\hline
\end{tabular}
\end{center}
\end{table}

\section{Parallel results}
\label{sec:parallel}
In this section, we present strong parallel scaling results for the ERK$p$+U$p$, with $p \in \{1,3,5\}$, multilevel solvers developed in \S \ref{ssec:LSQ_multilevel}. We show that they can lead to significant speed-ups over sequential time-stepping. We have obtained analogous parallel results for the two-level solvers developed in \S \ref{ssec:LSQ_results}, but these lead to smaller speed-ups and so have been omitted here.

The implementations use the open-source package XBraid.\cite{xbraid-package} The results were generated on Quartz, a Linux cluster at Lawrence Livermore National Laboratory consisting of 2{,}688 compute nodes, with dual 18-core 2.1 GHz Intel Xeon processors per node. For each discretization, we consider the strong scaling of a single problem whose space-time grid is the largest from Table~\ref{tab:LSQ_multilevel}, and the number of levels in the solver is taken as the maximum shown in this table. Since we want to demonstrate the benefits of parallelization in time, we only consider parallelization in the time direction. As throughout the rest of this paper, we first consider a stopping criterion based on achieving a space-time residual below $10^{-10}$ in the discrete $L_2$-norm, but a stopping criterion based on achieving discretization error accuracy is also considered. 

In our parallel tests, we have considered both V- and F-cycles with coarsening factors of $m = 2$ and $m = 4$. We find that F-cycles require fewer iterations to converge than V-cycles, but this of course comes at the cost of added work and communication. Accordingly, we typically find that the best results arise from the use of V-cycles with $m = 4$ coarsening and, thus, results for this configuration are shown here, in Figure~\ref{fig:strong_scaling_Vcycle}. The plots show good parallel scaling with benefit over sequential time-stepping when using at least 32 processors in almost all cases, which is on par with what has been achieved for model diffusion-dominated problems using time-only parallelism.\cite{Falgout_etal_2017} The largest speed-up achieved over sequential time-stepping is at 1024 processors, where MGRIT is faster by a factor of about 3.8, 8.4, and 18.1 when solving up to $10^{-10}$ residual tolerance, and of about 10.0, 12.6, and 13.7 when solving up to discretization error (for the discretizations in the order of increasing accuracy). 

The relative speed-ups shown here further demonstrate the improvements given by this work over existing parallel-in-time strategies for hyperbolic PDEs. For example, achieving MGRIT speed-up with high-order discretizations of any hyperbolic problem is unheard of in the literature, and so the fact that we have been able to achieve a speed-up on the order of 15 times for a highly accurate explicit 5th-order discretization run at a realistic CFL fraction is significant.

\begin{figure}[h!t]
\centerline{
\includegraphics[width=0.448\textwidth]{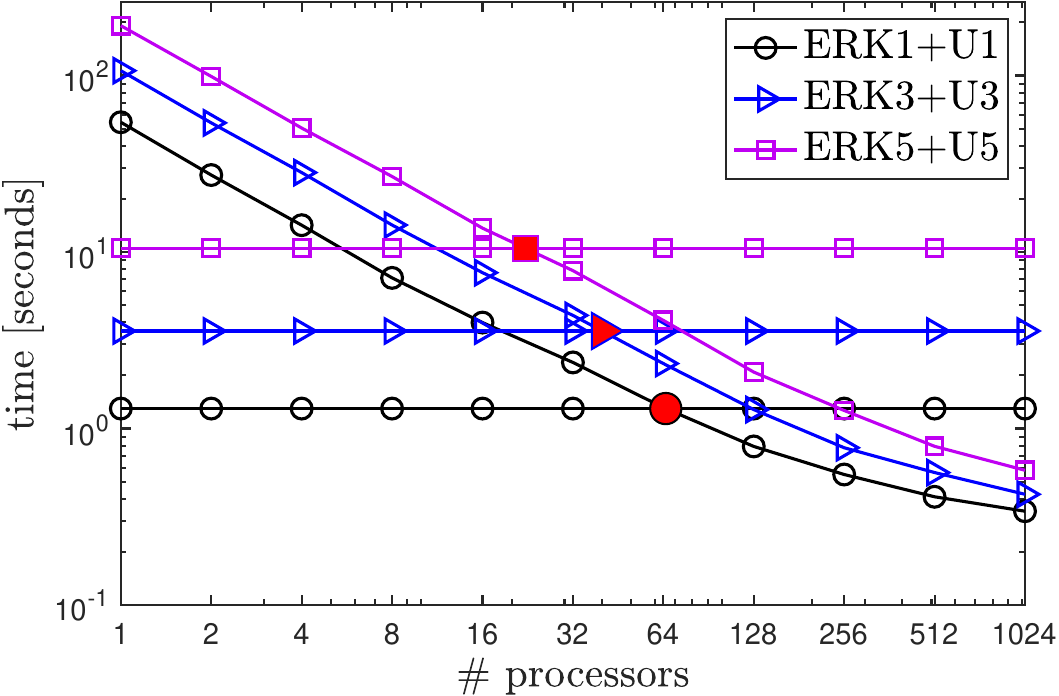}
\quad\quad
\includegraphics[width=0.448\textwidth]{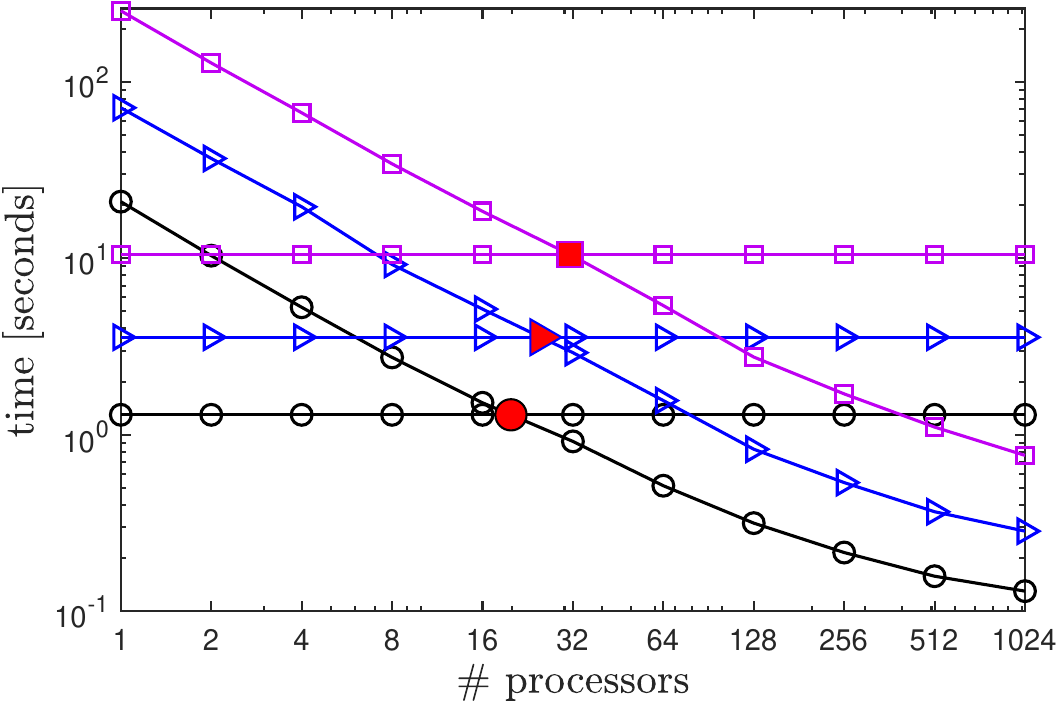}
}
\caption{
Strong parallel scaling: Runtimes of MGRIT V-cycles with $m=4$ coarsening and using time-only parallelism for ERK$p$+U$p$ discretizations on space-time grids of size  $n_x \times n_t = 2^{12}\times (2^{14},2^{13},2^{13})$ for $p = (1,3,5)$. Left: Fixed residual stopping tolerance of $10^{-10}$. Right: Residual stopping tolerance based on the discretization error. Dashed lines represent runtimes of time-stepping on one processor for reference purposes. Solid red markers represent crossover points. 
\label{fig:strong_scaling_Vcycle}
}
\end{figure}

\section{Conclusions}
\label{sec:conclusions}
In this paper, we consider the parallel-in-time integration of the one-dimensional, constant-coefficient linear advection problem using the MGRIT and Parareal algorithms. This PDE represents the simplest of all hyperbolic problems, yet, to the best of our knowledge, no parallel-in-time solvers have been successfully applied to accurate discretizations of this problem, yielding inexpensive solvers that achieve fast and scalable convergence for realistic CFL fractions close to one. We use existing convergence theory to explain why this problem is so difficult, and what is required of coarse-grid operators for its efficient solution. Convergence hinges on the coarse-grid operator accurately propagating slowly-decaying spatial modes through time very similarly to the fine-grid operator. The larger number of such modes for advection-dominated problems compared with diffusion-dominated problems means that even small differences between fine- and coarse-grid operators typically result in extremely poor convergence.

For this PDE, we develop `near-optimal' coarse-grid operators through the approximate minimization of two-level error estimates. We use these coarse-grid operators for both explicit and implicit discretizations of low- and high-order accuracy and demonstrate that they lead to solvers with fast and scalable convergence that is on par with performance typically seen from these parallel-in-time algorithms when applied to diffusion-dominated problems. For explicit discretizations, we show that it is possible to overcome the CFL-driven divergence that arises from naively applying a conditionally stable discretization on the coarse grid. Primarily, this is done by tracking information along characteristic curves of the PDE on the coarse grid. Moreover, we show that observing this characteristic nature is also important for unconditionally stable implicit discretizations. Finally, our results indicate that higher-order discretizations of this problem benefit more from parallel-in-time integration compared with those of lower order. 
 
While the optimization approach proposed here is very effective, it also has clear limitations, requiring the solution of expensive optimization problems. Further, it is not directly generalizable to more complicated PDEs than constant-coefficient linear advection. However, we expect that the heuristic insights developed here will be important for more complicated hyperbolic problems. Future work will focus on developing practical coarse-grid operators for linear advection as well as for more complicated hyperbolic problems which are of practical interest, such as those with non-constant coefficients and nonlinearities. Our major finding, that coarse operators need to track characteristics, will be a central guiding principle in this future work; indeed, this has already led to some promising results in initial research in these directions. 
Furthermore, the spatial parallelization of the coarse-grid operators developed here will also be considered, which is not straightforward due to their coupling of connections over longer distances in space.
 
\appendix
\section{Runge-Kutta Butcher tableaux}
\label{sec:butcher}

For completeness, Butcher tableaux are given here for the Runge-Kutta methods used in the main text. Explicit Runge-Kutta (ERK) methods are given in Table~\ref{tab:ERK1_2_3}, and L-stable singly diagonally implicit Runge-Kutta (SDIRK) methods are given in Table~\ref{tab:SDRK}.

\begin{center}
\begin{table*}[h!t]%
\caption{
Butcher tableaux for ERK methods of orders 1--5.
\label{tab:ERK1_2_3}
}
\centering
\begin{tabular*}{\textwidth}{@{\extracolsep\fill}ccccc@{\extracolsep\fill}}
	ERK1 
	& 
	ERK2 \cite[(9.7)]{Hesthaven2017} 
	& 
	ERK3 \cite[(9.8)]{Hesthaven2017}
	& 
	ERK4 \cite[p. 180]{Butcher2003} 
	& 
	ERK5 \cite[(236a)]{Butcher2003}\\
	\begin{tikzpicture}
	\node (0,0) {$\renewcommand\arraystretch{1.6}\arraycolsep=6pt
	\begin{array}
	{c|c}
	0 & 0\\
	\cline{1-2}
	& 1
	\end{array}$};
	\end{tikzpicture} 
	&	
	\begin{tikzpicture}
	\node (0,0) {$\renewcommand\arraystretch{1.6}\arraycolsep=6pt
	\begin{array}
	{c|cc}
	0	& 0 					& 0 \\
	1	& 1 					& 0 \\
	\cline{1-3}
	 	& \tfrac{1}{2}	& \tfrac{1}{2}
	\end{array}$};
	\end{tikzpicture} 
	&	
	\begin{tikzpicture}
	\node (0,0) {$\renewcommand\arraystretch{1.6}\arraycolsep=6pt
	\begin{array}
	{c|ccc}
	0 						& 0 					& 0 					& 0 	\\
	1 						& 1 					& 0 					& 0	\\
	\tfrac{1}{2}		& \tfrac{1}{4} 	& \tfrac{1}{4}	& 0	\\
	\cline{1-4}
							& \tfrac{1}{6}	& \tfrac{1}{6}	& \tfrac{2}{3}
	\end{array}$};
	\end{tikzpicture} 
	&
	\begin{tikzpicture}
	\node (0,0) {$\renewcommand\arraystretch{1.6}\arraycolsep=6pt
	\begin{array}
	{c|cccc}
         0         			&          0         	&           0        	& 0 					& 0\\
		\tfrac{1}{2}	& \tfrac{1}{2} 	&           0        	& 0 					& 0\\
		\tfrac{1}{2}	&          0         	& \tfrac{1}{2} 	& 0 					& 0\\
            1       			&          0         	&           0         	& 1 					& 0\\
		\cline{1-5}
							& \tfrac{1}{6} 	& \tfrac{1}{3}	& \tfrac{1}{3}	& \tfrac{1}{6} 
	\end{array}$};
	\end{tikzpicture} 
	&	
	\begin{tikzpicture}
	\node (0,0) {$\renewcommand\arraystretch{1.6}\arraycolsep=6pt
	\begin{array}
	{c|cccccc}
         0         		&          0          	&           0         	&         0           		&         0           		& 0 						& 0\\
	\tfrac{1}{4}  	& \tfrac{1}{4}  	&           0         	&         0           		&         0           		& 0 						& 0\\
	\tfrac{1}{4}  	& \tfrac{1}{8}  	&   \tfrac{1}{8}	&         0           		&         0           		& 0 						& 0\\
	\tfrac{1}{2}  	&          0          	&           0         	& \tfrac{1}{2}  		&         0             	& 0 						& 0\\
	\tfrac{3}{4}	& \tfrac{3}{16}	& -\tfrac{3}{8}	& \tfrac{3}{8} 		& \tfrac{9}{16}  	& 0 						& 0 \\
          1          	& -\tfrac{3}{7} 	&   \tfrac{8}{7}	& \tfrac{6}{7} 		& -\tfrac{12}{7} 	& \tfrac{8}{7} 		& 0\\
	\cline{1-7}
						& \tfrac{7}{90} & 0 					& \tfrac{32}{90} 	& \tfrac{12}{90}	& \tfrac{32}{90}	& \tfrac{7}{90} 
	\end{array}$};
	\end{tikzpicture}
\end{tabular*}
\end{table*}
\end{center}

\begin{center}
\begin{table*}[h!t]%
\caption{
Butcher tableaux for L-stable SDIRK methods of orders 1--4. Constants used in SDIRK3 are: 
$\zeta~=~0.43586652150845899942\ldots, \,
\alpha = \tfrac{1+\zeta}{2}, \,
\beta = \tfrac{1-\zeta}{2}, \,
\gamma = -\tfrac{3}{2}\zeta^2 + 4\zeta - \tfrac{1}{4}, \,
\epsilon = \tfrac{3}{2}\zeta^2 - 5\zeta + \tfrac{5}{4}$.
\label{tab:SDRK}
}
\centering
\begin{tabular*}{\textwidth}{@{\extracolsep\fill}cccc@{\extracolsep\fill}}
	SDIRK1 
	& 
	SDIRK2 \cite[p. 261]{Butcher2003} 
	& 
	SDIRK3 \cite[p. 262]{Butcher2003} 
	& SDIRK4 \cite[(6.16)]{Hairer_Wanner1996} 
	\\
	\begin{tikzpicture}
	\node (0,0) {$\renewcommand\arraystretch{1.6}\arraycolsep=6pt
	\begin{array}
	{c|c}
	1 & 1\\
	\cline{1-2}
	& 1
	\end{array}$};
	\end{tikzpicture} 
	&	
	\begin{tikzpicture}
	\node (0,0) {$\renewcommand\arraystretch{1.6}\arraycolsep=6pt
	\begin{array}
	{c|cc}
	1 - \tfrac{\sqrt{2}}{2}	& 1 - \tfrac{\sqrt{2}}{2}	& 0 \\
	1 								    	& \tfrac{\sqrt{2}}{2}      	& 1 - \tfrac{\sqrt{2}}{2} \\
	\cline{1-3}
										& \tfrac{\sqrt{2}}{2} 		& 1 - \tfrac{\sqrt{2}}{2}	
	\end{array}$};
	\end{tikzpicture} 
	&	
	\begin{tikzpicture}
	\node (0,0) {$\renewcommand\arraystretch{1.6}\arraycolsep=6pt
	\begin{array}
	{c|ccc}
	\zeta	& \zeta   		& 0  				& 0  \\
	\alpha  	& \beta  		& \zeta  		& 0  \\
	1   		& \gamma	& \epsilon	& \zeta  \\
	\cline{1-4}
				& \gamma 	& \epsilon 	& \zeta
	\end{array}$};
	\end{tikzpicture} 
	& 
	\begin{tikzpicture}
	\node (0,0) {$\renewcommand\arraystretch{1.6}\arraycolsep=6pt
	\begin{array}
	{c|ccccc}
	\tfrac{1}{4}      	& \tfrac{1}{4}            & 0                                	& 0                        	& 0                        	& 0 \\
	\tfrac{3}{4}     	& \tfrac{1}{2}           	& \tfrac{1}{4}               	& 0                        	& 0                        	& 0 \\
	\tfrac{11}{20}	& \tfrac{17}{50}       	& -\tfrac{1}{25}           	& \tfrac{1}{4}      	& 0                        	& 0 \\
	\tfrac{1}{2}      	& \tfrac{371}{1360}	& -\tfrac{137}{2720}	& \tfrac{15}{544}	& \tfrac{1}{4}     	& 0 \\
	1                       	& \tfrac{25}{24}      	& -\tfrac{49}{48}      	& \tfrac{125}{16} 	& -\tfrac{85}{12}	& \tfrac{1}{4} \\
	\cline{1-6}
							& \tfrac{25}{24}     	& -\tfrac{49}{48}        	& \tfrac{125}{16} 	& -\tfrac{85}{12}	& \tfrac{1}{4}
	\end{array}$};
	\end{tikzpicture} 
\end{tabular*}
\end{table*}
\end{center}
\renewcommand*{\arraystretch}{1} 

\section*{Acknowledgments}
This work was performed under the auspices of the U.S. Department of Energy by Lawrence Livermore National Laboratory under Contract DE-AC52-07NA27344 (LLNL-JRNL-789101).
The work of O.K. was supported by an Australian Government Research Training Program Scholarship, and was partially supported by International Mobility Programme funding from the ARC Centre of Excellence for Mathematical and Statistical Frontiers.  The work of S.M. and H.D.S. was partially supported by NSERC discovery grants RGPIN-2014-06032, RGPIN-2019-05692 and RGPIN-2019-04155.
The authors are grateful to Matt Menickelly for helpful conversations regarding the optimization aspects of this work, and would like to thank Irad Yavneh for his insightful comments on connections between this problem and that considered in Yavneh.\cite{Yavneh1998}
The authors are grateful to the anonymous reviewers for their constructive feedback, which has significantly improved this article.
This study does not have any conflicts of interest.

\nocite{*}
\bibliography{mgrit_cg_selection_refs}%

\vspace{1ex}
\tiny{This document was prepared as an account of work sponsored by an agency of the United States government. Neither the United States government nor Lawrence Livermore National Security, LLC, nor any of their employees makes any warranty, expressed or implied, or assumes any legal liability or responsibility for the accuracy, completeness, or usefulness of any information, apparatus, product, or process disclosed, or represents that its use would not infringe privately owned rights. Reference herein to any specific commercial product, process, or service by trade name, trademark, manufacturer, or otherwise does not necessarily constitute or imply its endorsement, recommendation, or favoring by the United States government or Lawrence Livermore National Security, LLC. The views and opinions of authors expressed herein do not necessarily state or reflect those of the United States government or Lawrence Livermore National Security, LLC, and shall not be used for advertising or product endorsement purposes.}

\end{document}